\definecolor{light}{gray}{0.8}
\newtheorem{theorem}{Theorem}[section]
\newtheorem{lemma}[theorem]{Lemma}
\newtheorem{proposition}[theorem]{Proposition}
\newtheorem{Ex}[theorem]{Example}
\newtheorem{definition}[theorem]{Definition}
\newtheorem{remark}[theorem]{Remark}
\newtheorem{corollary}[theorem]{Corollary}
\renewcommand{\vec}[1]{\boldsymbol{#1}}
\newcommand{\vb}{\vspace{3mm}}
\newcommand{\hN}{^{[N]}}
\newcommand{\DD}{{\rm d}}
\newcommand{\PP}{{\mathbb P}}
\newcommand{\EE}{{\mathbb E}}
\newcommand{\vt}{\vartheta}
\newcommand{\diag}{\mathrm{diag}}
\newcommand{\ii}{\mathrm{i}}
\newcommand{\rr}{\mathbb{R}}
\newcommand{\dd}{\mathrm{d}}
\newcommand{\ee}{\mathbb{E}}
\newcommand{\half}{\frac{1}{2}}
\newcommand{\cov}{{\mathbb C}{\rm ov}}
\newcommand{\one}{\mathbf{1}}
\newcommand{\N}{{N}}
\newcommand{\m}{\boldsymbol}
\newcommand{\sN}{\hN}
\newcommand{\tN}{^{[N,h]}}
\newcommand{\mar}{\Phi}
\newcommand{\bz}{{\m Z}}
\newcommand{\bpi}{{\boldsymbol \pi}}
\newcommand{\rmt}{\mathrm{T}}
\newcommand{\bzetan}{{\boldsymbol \zeta}^{[N]}}
\newcommand{\ynh}{Y^{[N,h]}}
\newcommand{\balpha}{\boldsymbol\alpha}
\newcommand{\bgamma}{\boldsymbol\gamma}
\newcommand{\bsigma}{\boldsymbol\sigma}
\let\originalleft\left
\let\originalright\right
\renewcommand{\left}{\mathopen{}\mathclose\bgroup\originalleft}
\renewcommand{\right}{\aftergroup\egroup\originalright}
\begin{document}
\title{{Markov-modulated Ornstein-Uhlenbeck processes}}

\author{G.\ Huang$^\bullet$, H.\ M. Jansen$^{\bullet,\dagger}$, M.\ Mandjes$^{\bullet,\star}$, P.\ Spreij$^\bullet$, \& K.\ De Turck$^\dagger$}
\maketitle
\begin{abstract} \noindent
In this paper we consider an Ornstein-Uhlenbeck ({\sc ou}) process $(M(t))_{t\geqslant 0}$ whose 
parameters are determined by an external  Markov process $(X(t))_{t\geqslant 0}$ on a finite state space $\{1,\ldots,d\}$; 
this process is usually referred to as {\it Markov-modulated Ornstein-Uhlenbeck} (or: {\sc mmou}). We use stochastic integration theory 
to determine explicit expressions for the mean and variance of $M(t)$. Then we establish a system of partial differential equations ({\sc pde}\,s) for the Laplace transform of $M(t)$ and the state $X(t)$ of the background process, jointly for time epochs $t=t_1,\ldots,t_K.$ Then we use this {\sc pde} to set up a recursion that yields all moments of $M(t)$ and its stationary counterpart; we also
find an expression for the covariance between $M(t)$ and $M(t+u)$.
We then establish a functional central limit theorem for $M(t)$ for the situation that certain parameters of  the underlying {\sc ou} processes are scaled, in combination with the modulating Markov process being accelerated; interestingly, specific scalings lead to drastically different limiting processes.
We conclude the paper by considering the situation of a single Markov process modulating {\it multiple} {\sc ou} processes.

\vb

\noindent {\it Keywords.} Ornstein-Uhlenbeck processes $\star$ Markov modulation $\star$ regime
switching $\star$ martingale techniques $\star$ central-limit theorems

\vb

\noindent {Work partially done while K.\ de Turck was visiting Korteweg-de Vries Institute for Mathematics,
University of Amsterdam, the Netherlands, with greatly appreciated financial support from {\it Fonds Wetenschappelijk Onderzoek / Research Foundation -- Flanders}.

\begin{itemize}
\item[$^\bullet$] Korteweg-de Vries Institute for Mathematics,
University of Amsterdam, Science Park 904, 1098 XH Amsterdam, the Netherlands.
\item[$^\star$] CWI, P.O. Box 94079, 1090 GB Amsterdam, the Netherlands.
\item[$^\dagger$] TELIN, Ghent University, St.-Pietersnieuwstraat 41,
B9000 Gent, Belgium.
\end{itemize}
\noindent
M.\ Mandjes is also with  E{\sc urandom}, Eindhoven University of Technology, Eindhoven, the Netherlands,
and IBIS, Faculty of Economics and Business, University of Amsterdam,
Amsterdam, the Netherlands.}

\vb

\noindent {\it Email}. {\scriptsize $\{$\tt{g.huang|h.m.jansen|m.r.h.mandjes|p.j.c.spreij}$\}$\tt{@uva.nl}, 
 {\tt kdeturck@telin.ugent.be}}
\end{abstract}

\newpage

\section{Introduction}
The Ornstein-Uhlenbeck ({\sc ou}) process is a stationary Markov-Gauss process, with the additional feature that is eventually reverts to its long-term mean; see the seminal paper \cite{OU}, as well as \cite{JAC}
for a historic account. Having originated from physics,  by now the process has found widespread use in a broad range of other application domains: finance, population dynamics, climate modeling, etc. In addition, it plays an important role in queueing theory, as it can be seen as the limiting process of specific classes of infinite-server queues under a certain scaling \cite{ROBE}.
The {\sc ou} process is characterized by three parameters (which we call $\alpha$, $\gamma$, and $\sigma^2$ throughout this paper), which relate to the process' mean, convergence speed towards the mean, and variance, respectively.

The probabilistic properties of the {\sc ou} process have been thoroughly studied. One of the key results is that
its value at a given time $t$ has a Normal distribution, with a  mean and variance that can be expressed explicitly in terms of the parameters $\alpha$, $\gamma$, and $\sigma^2$ of the underlying {\sc ou} process; see for instance \cite[Eqn.\ (2)]{JAC}. In addition, various other quantities have been analyzed, such as the distribution of first passage times  or the maximum value attained in an interval of given length; see e.g.\ \cite{ALI} and references therein.

The concept of {\it regime switching} (or:\ {\it Markov modulation}, as it is usually referred to in the operations research literature) has become increasingly important over the past decades.  In regime switching, the parameters of the underlying stochastic process are determined by an external {\it background process} (or: {\it modulating process}), that is typically assumed to evolve independently of the stochastic process under consideration. Often the background process is assumed to be a Markov chain defined on a {\it finite} state space, say $\{1,\ldots,d\}$; in the context of Markov-modulated {\sc ou} ({\sc mmou}) this means that when this Markov chain is in state $i$, the process locally behaves as a {\sc ou} process with parameters $\alpha_i$, $\gamma_i$, and $\sigma^2_i.$

\vb

Owing to its various attractive features, regime switching has become an increasingly popular concept. In a broad spectrum of application domains it offers a  natural framework for modeling 
situations in which the stochastic process under study reacts to 
an autonomously evolving environment. In finance, for instance, one could identify the background process with the `state of the economy', for instance as a two-state process (that is, alternating between a `good' and a `bad' state), to which e.g.\ asset prices react. Likewise,  in wireless networks the concept can be used to model the channel conditions that vary in time, and to which users react. 

In the operations research literature there is a sizable body of work on Markov-modulated queues, see e.g.\ the textbooks \cite[Ch.\ XI]{ASM} and \cite{NEUTS}, while Markov modulation has been intensively used in insurance and risk theory as well \cite{AA}. In the financial economics literature, the use of regime switching dates back to at least the late 1980s \cite{HAM}; various specific models have been considered since then, see for instance \cite{ANG,EMAM,ESIU}.

\vb

In this paper we present a set of new results in the context of the analysis of {\sc mmou}. Here and in the sequel we let $M(t)$ denote the position of the {\sc mmou} process at time $t$, whereas $M$ denotes its stationary counterpart.
In the first place we derive explicit formulas for the mean and variance of $M(t)$ and $M$, jointly with the state of the background process, relying on standard machinery from stochastic integration theory. In specific special cases the resulting formulas simplify drastically (for instance when it is assumed that the background process starts off in equilibrium at time 0, or when the parameters $\gamma_i$ are assumed uniform across the states $i\in\{1,\ldots,d\}$). 

The second contribution concerns the derivation of a system of partial differential equations for the Laplace transform of $M(t)$; when equating the partial derivative with respect to time to 0, we obtain a system of ordinary differential equations for the Laplace transform of $M$. This result is directly related to \cite[Thm. 3.2]{XING}, with the differences being that there the focus is on just stationary behavior, and that the system considered there has the additional feature of reflection at a lower boundary (to avoid the process attaining negative values). We set up a recursive procedure that generates all moments of $M(t)$; in each iteration a non-homogeneous system of differential equations needs to be solved. This procedure complements the recursion for the moments of the steady-state
quantity  $M$, as presented in \cite[Corollary 3.1]{XING} (in which each recursion step amounts to solving a system of linear equations). In addition, we also set up a system of partial differential equations for the Laplace transform associated with the joint distribution of $M(t_1),\ldots, M(t_K)$, and determine the covariance ${\mathbb C}{\rm ov}\,(M(t,t+u)).$

A third contribution concerns the behavior of the {\sc mmou} process under certain parameter scalings. 
\begin{itemize}
\item[$\rhd$]
A first scaling that we consider concerns speeding up the jumps of the background process by a factor $N$. Using the system of partial differential equations that we derived earlier, it is shown that the limiting process, obtained by sending $N\to\infty$, is an {\it ordinary} (that is, non-modulated) {\sc ou} process, with parameters that are time averages of the individual $\alpha_i$, $\gamma_i$, and $\sigma_i^2$.  
\item[$\rhd$]
A second  regime that we consider scales the transition rates of the Markovian background process by $N$, while the $\alpha_i$ and $\sigma_i^2$ are inflated by a factor $N^h$ for some $h>0$; the resulting process we call $M\tN(t)$. We then center (subtract the mean, which is roughly proportional to $N^h$) and normalize $M\tN(t)$, with the goal  to establish a central limit theorem ({\sc clt}). Interestingly, it depends on the value of $h$ 
what the appropriate normalization is. If $h<1$ the variance of $M\tN(t)$ is roughly proportional to the `scale' at which the modulated {\sc ou} process operates, viz.\ $N^h$, and as a consequence the normalization looks like $N^{h/2}$; at an intuitive level, the timescale of the background process is so fast, that the process essentially looks like an {\sc ou} process with time-averaged parameters. If, on the contrary, $h>1$, then the variance of $M\tN(t)$ grows like $N^{2h-1}$, which is faster than $N^h$; as a consequence, the proper normalization looks like $N^{h-1/2}$; in this case the variance that appears in the {\sc clt} is directly related to the deviation matrix \cite{CS} associated with the background process.
Importantly, we do not just prove Normality for a given value of $t>0$, but rather weak convergence (at the process level, that is) to the solution of a specific limiting stochastic differential equation.

\iffalse The proof of this result is rather involved, and intensively uses the system of (partial) differential equations that we derived earlier; this is a $d$-dimensional system, as it describes the Laplace transform of $M^{(N)}(t)$ jointly with the state of the background process. The idea is to manipulate the differential equations for the time-scaled process (in which step the deviation matrix enters the computations), which is then translated into system of differential equations for the centered and normalized version of $M^{(N)}(t)$ (again jointly with the state of the background process). This eventually results in a single-dimensional differential equation for the Laplace transform of the limiting value ($N\to\infty$) of the centered and normalized version of $M^{(N)}(t)$. This Laplace transform corresponds to a Normal distribution, and with L\'evy's convergence theorem the asymptotic Normality follows. \fi
\end{itemize}
The last contribution focuses on the situation that a single Markovian background process modulates {\it multiple} {\sc ou} processes. This, for instance, models the situation in which different asset prices react to the same `external circumstances' (i.e., state of the economy), or the situation in which different users of a wireless network react to the same channel conditions. The probabilistic behavior of the system is captured through a system of partial differential equations. It is also pointed out how the corresponding moments can be found.

\vb

Importantly, there is a strong similarity between the results presented in the framework of the present paper, and corresponding results for Markov-modulated {\it infinite-server queues}. In these systems the background process modulates an M/M/$\infty$ queue, meaning that we consider an M/M/$\infty$ queue of which  the arrival rate and service rate are determined by the state of the background process \cite{DAURIA,FRALIXADAN2009}. For these systems, the counterparts of our {\sc mmou} results have been established: the mean and variance have been computed in e.g.\ \cite{BKMT,OCP}, (partial) differential equations
for the Laplace transform of $M(t)$, as well as recursions for higher moments can be found in \cite{BMT,BKMT,OCP}, whereas parameter scaling results are given in \cite{BMT,BKMT} and, for a slightly different model \cite{BMTh}. Roughly speaking, any property that can be handled explicitly for the Markov-modulated infinite-server queue can be explicitly addressed for {\sc mmou} as well, and vice versa.

\vb

This paper is organized as follows. Section \ref{MOD} defines the model, and presents preliminary results. Then Section \ref{TB} deals with the system's transient behavior, in terms of a recursive scheme that yields all moments of $M(t)$, with explicit expressions for the mean and variance.
Section \ref{TBPDE} presents a system of partial differential equations for the Laplace transform of $M(t)$ (which becomes a system of ordinary differential equations in steady state). In Section \ref{Sec:TS}, the parameter scalings mentioned above are applied (resulting in a process $M^{(N)}(t)$), leading to a functional {\sc clt} for an appropriately centered and normalized version of $M^{(N)}(t)$. The last section considers the setting of a single background process modulating multiple {\sc ou} processes.

\section{Model and preliminaries}\label{MOD}
We start by giving a detailed model description of the Markov-modulated Ornstein-Uhlenbeck ({\sc mmou}) process. We are given a probability space $\left( \Omega , \mathcal{F} , \mathbb{P} \right)$ on which a random variable $M_0$, a standard Brownian motion $(B(t))_{t \geqslant 0}$ and a continuous-time Markov process $(X(t))_{t \geqslant 0}$ with finite state space are defined. It is assumed that $M_0$, $X$ and $B$ are independent. The process $X$ is the so-called background process; its state space is denoted by $\left\lbrace 1 , \ldots , d \right\rbrace$.

The idea behind {\sc mmou} is that the background process $X \left( \cdot \right)$ modulates an Ornstein-Uhlenbeck process. Intuitively, this means that while $X \left( \cdot \right)$ is in state $i \in \left\lbrace 1 , \ldots , d \right\rbrace$, the {\sc mmou} process $\left( M \left( t \right) \right)_{t \geqslant 0}$ behaves as an Ornstein-Uhlenbeck process $U_{i} \left( \cdot \right)$ with parameters $\alpha_{i}$, $\gamma_{i}$ and $\sigma_{i}$, which evolves independently of the background process $X \left( \cdot \right)$. In mathematical terms, this means that $M \left( \cdot \right)$ should obey the stochastic differential equation
\begin{align}
\label{SDE:MMOU}
{\rm d} M \left( t \right) = \left( \alpha_{X \left( t \right)} - \gamma_{X \left( t \right)} M \left( t \right) \right) \, {\rm d} t + \sigma_{X \left( t \right)} \, {\rm d} B \left( t \right).
\end{align}

To be more precise, we will call a stochastic process $(M(t))_{t \geqslant 0}$ an {\sc mmou} process with initial condition $M(0) = M_0$ if
\begin{align}
\label{SIE:intMMOU}
M(t) = M_0 + \int_{0}^{t} \left( \alpha_{X \left( s \right)} - \gamma_{X \left( s \right)} M \left( s \right) \right) \, {\rm d} s + \int_{0}^{t} \sigma_{X \left( s \right)} \, {\rm d} B \left( s \right).
\end{align}
The following theorem provides basic facts about the existence, uniqueness and distribution of an {\sc mmou} process. For proofs and additional details, see Section~\ref{sec:mmouexist}. As mentioned in the introduction, specific aspects of {\sc mmou} have been studied earlier in the literature; see for instance \cite{XING}.

\begin{theorem}\label{PROP}
Define $\Gamma (t) := \int_{0}^{t} \gamma_{X (s)} \, {\rm d} s$. Then the stochastic process $(M(t))_{t \geqslant 0}$ given by
\begin{align*}
M \left( t \right) = M_0 e^{ - \Gamma \left( t \right) } + \int_{0}^{t} e^{ - \left( \Gamma \left( t \right) - \Gamma \left( s \right) \right) } \alpha_{X \left( s \right)} \, {\rm d} s 
+ \int_{0}^{t} e^{ - \left( \Gamma \left( t \right) - \Gamma \left( s \right) \right) } \sigma_{X \left( s \right)} \, {\rm d} B \left( s \right)
\end{align*}
is the unique {\sc mmou} process with initial condition $M_0$. 

Conditional on the process $X$, the random variable $M \left( t \right)$ has a Normal distribution with random mean
\begin{equation}
\label{rmu}
\mu \left( t \right) = M _0 \exp \left( - \Gamma \left( t \right) \right) + \int_{0}^{t} \exp \left( - \left( \Gamma \left( t \right) - \Gamma \left( s \right) \right) \right) \alpha_{X \left( s \right)} \, {\rm d} s\end{equation}
and random variance
\begin{equation}\label{rv}
v \left( t \right) = \int_{0}^{t} \exp \left( -2 \left( \Gamma \left( t \right) - \Gamma \left( s \right) \right) \right) \sigma_{X \left( s \right)}^2 \, {\mathrm d} s.
\end{equation}
\end{theorem}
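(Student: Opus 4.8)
The plan is to solve the linear {\sc sde} \eqref{SDE:MMOU} by the integrating-factor method and then to read off the conditional law by freezing the trajectory of $X$. Throughout, the essential simplification is that $X$ takes values in the finite set $\{1,\dots,d\}$, so $\alpha_i,\gamma_i,\sigma_i$ are uniformly bounded; consequently $\Gamma(t)=\int_0^t\gamma_{X(s)}\,\DD s$ is Lipschitz in $t$, hence continuous and of finite variation, and every integral below is finite, with $\int_0^t e^{2\Gamma(s)}\sigma_{X(s)}^2\,\DD s<\infty$ almost surely so that the Itô integral is well defined.

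First I would introduce the integrating factor $e^{\Gamma(t)}$ and apply the product rule to $e^{\Gamma(t)}M(t)$. Since $\Gamma$ is continuous and of finite variation it has no martingale part and contributes no bracket term, so that $\DD\!\left(e^{\Gamma(t)}M(t)\right)=e^{\Gamma(t)}\,\DD M(t)+M(t)e^{\Gamma(t)}\gamma_{X(t)}\,\DD t$. Substituting \eqref{SDE:MMOU}, the two terms carrying $\gamma_{X(t)}M(t)$ cancel, leaving
\[
\DD\!\left(e^{\Gamma(t)}M(t)\right)=e^{\Gamma(t)}\alpha_{X(t)}\,\DD t+e^{\Gamma(t)}\sigma_{X(t)}\,\DD B(t).
\]
Integrating over $[0,t]$ and multiplying by $e^{-\Gamma(t)}$ produces precisely the stated representation; conversely, reversing this computation shows that the displayed process solves \eqref{SIE:intMMOU}. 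Uniqueness is immediate from the same device: the difference $D$ of two solutions obeys the homogeneous equation $\DD D(t)=-\gamma_{X(t)}D(t)\,\DD t$ with $D(0)=0$, and the integrating factor gives $e^{\Gamma(t)}D(t)\equiv0$, hence $D\equiv0$. (Alternatively, conditional on $X$ this is a linear equation with bounded, hence Lipschitz, coefficients, so strong existence and uniqueness are classical.)

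For the distributional claim I would condition on the whole path of $X$ together with $M_0$ (the latter appears explicitly in the mean, so it is held fixed). As $B$ is independent of $X$, under this conditioning $B$ remains a standard Brownian motion while $\Gamma(\cdot)$ and the integrands $e^{-(\Gamma(t)-\Gamma(s))}\alpha_{X(s)}$ and $e^{-(\Gamma(t)-\Gamma(s))}\sigma_{X(s)}$ become deterministic. The term $M_0e^{-\Gamma(t)}$ plus the drift integral is then the constant $\mu(t)$ of \eqref{rmu}, whereas the stochastic term is a Wiener integral of a deterministic square-integrable function, therefore centred Gaussian with variance $\int_0^t e^{-2(\Gamma(t)-\Gamma(s))}\sigma_{X(s)}^2\,\DD s=v(t)$ by the Itô isometry. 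Hence, conditionally, $M(t)$ is $\mathcal N(\mu(t),v(t))$.

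I expect the only real obstacle to be making this conditioning rigorous rather than the algebra: one must pass to a regular conditional distribution given $X$ and verify that the pathwise-in-$X$ Wiener integral coincides, as a conditional law, with the original Itô integral, so that conditioning on the (jump) trajectory of $X$ preserves the Brownian character of $B$ and commutes with stochastic integration. The Gaussianity of a deterministic-integrand integral and the isometry are standard once that step is secured, and the finiteness of the state space is exactly what guarantees the boundedness and square-integrability those standard facts require.
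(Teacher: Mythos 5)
Your proposal is correct and follows essentially the same route as the paper's own proof in Appendix A: an integrating-factor/product-rule verification of the representation, pathwise ODE uniqueness for the difference of two solutions (the paper cites Hale's ODE theorems where you use the integrating factor directly, a cosmetic difference), and conditional Normality obtained by conditioning on $\mathcal{F}_\infty^X$ and using that the stochastic term is then a Wiener integral of a deterministic square-integrable integrand. The subtlety you flag about making the conditioning rigorous is exactly the point the paper addresses by working with the augmented filtration generated by $(M_0,X,B)$ and computing the conditional characteristic function directly.
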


This result is analogous with the corresponding result for the Markov-modulated infinite-server queue in \cite{BKMT,DAURIA}: there it is shown that the number of jobs in the system has a Poisson distribution with random parameter.

\vb

For later use, we now recall some concepts pertaining to the theory of deviation matrices of Markov processes.
For an introduction to this topic we refer to standard texts such as \cite{KEIL,KEM,SYS}. For a compact survey, see \cite{CS}.

Let the transition rates corresponding to the continuous-time Markov chain $(X(t))_{t\geqslant 0}$ be given by $q_{ij}\geqslant 0$ for $i\not=j$ and $q_i:=-q_{ii}:=\sum_{j\not=i}q_{ij}$. These transition rates define the {\it intensity matrix} or {\it generator} $Q$. The (unique) invariant distribution corresponding to $Q$ is denoted by (the column vector) ${\boldsymbol \pi}$, i.e.,
it obeys ${\boldsymbol \pi}Q = {\boldsymbol 0}$ and $ \vec{1}^{\rm T} \vec{\pi}=1$, where
$ \vec{1}$ is a $d$-dimensional all-ones vector.

Let $\Pi := \vec{1} \vec{\pi}^{\rm T}$ denote the {\it ergodic matrix.}
Then the {\it fundamental matrix} is given by $F := (\Pi - Q)^{-1}$, whereas the {\it deviation matrix} is defined by $D := F - \Pi$.
Standard identities are $QF=FQ=\Pi - I$, as well as $\Pi D= D \Pi =0$ (here $0$ is to be read as an all-zeros $d\times d$ matrix)
and $F\vec{1}=\vec{1}.$ The $(i,j)$-th entry of the deviation matrix,
with $i,j\in\{1,\ldots,d\}$, can be alternatively computed as
\[
D_{ij}:= \int_0^\infty({\mathbb P}(X(t)=j\,|\,X(0)=i)-\pi_j){\rm d}t,\]
which in matrix form reads
\begin{equation}\label{eq:d}
D=\int_0^\infty \left(\exp(Qt)-\one\vec{\pi}^{\rm T}\right)\,\dd t.
\end{equation}

\section{Transient behavior: moments}\label{TB}
In this section we analyze the moments of $M(t)$. First considering the mean and variance in the general situation, we then  concentrate on more specific cases 
in which the expressions simplify greatly. In particular, we address the situation that all the $\gamma_i$s are equal, the situation that the background process starts off in equilibrium at time $0$, and the steady-state regime.
The computations are immediate applications of stochastic integration theory.
The section is completed by deriving an expression for the covariance between $M(t)$ and $M(t+u)$
(for $t,u\geqslant 0$), and a procedure that uses It\^o's formula to recursively determine all moments. 

\subsection{Mean and variance: general case}\label{section:gc}
Let ${\boldsymbol Z}(t)\in\{0,1\}^d$ be the vector of indicator functions associated with the Markov chain $(X(t))_{t\geqslant 0}$, that is, we let $Z_i(t)=1$ if $X(t)=i$ and $0$ else.
Let $\vec{p}_t$ denote the vector of transient probabilities of the background process, i.e., $({\mathbb P}(X(t) = 1),\ldots,{\mathbb P}(X(t)=d))^{\rm T}$
(where we have not specified the distribution of the initial state $X(0)$ yet).

We subsequently find expressions for the mean $\mu_t:={\mathbb E}M(t)$ and variance $v_t:={\mathbb V}{\rm ar}\,M(t)$.
\begin{itemize}
\item[$\rhd$]
The mean can be computed as follows. 
To this end, we consider the mean of $M(t)$ jointly with the state of the background process at time $t$.
To this end, we define $\vec{Y}(t):=\vec{Z}(t) M(t),$ and $\vec{\nu}_t:={\mathbb E}\vec{Y}(t).$
\iffalse Directly from the definitions,
with $\vec{Y}(t):=\vec{Z}(t) M(t),$
\[{\rm d}\mu_t = {\mathbb E}({\boldsymbol \alpha}^{\rm T}\vec{Z}(t)-\vec{\gamma}^{\rm T}\vec{Y}(t){\rm d}t.\]
Let $\vec{\nu}_t:={\mathbb E}\vec{Y}(t).$ From the above we conclude that
\begin{equation}
\label{MU}\mu_t' = {\boldsymbol \alpha}^{\rm T}\vec{p}_t-\vec{\gamma}^{\rm T}\vec{\nu}_t.\end{equation}
Obviously, this differential equation does not lead to an expression for $\mu_t$ yet (as $\mu_t$ appears on the left-hand side, while $\vec{\nu}_t$ appears on the right-hand side). This problem can be overcome as follows.
\fi
It is clear that 
\begin{equation}\label{eq:zz}
{\rm d}\vec{Z}(t) = Q^{\rm T}\vec{Z}(t)\,{\rm d}t +{\rm d} \vec{K}(t),
\end{equation} 
for a $d$-dimensional martingale $\vec{K}(t).$ With It\^o's rule we get, with $\bar{Q}_{\vec{\gamma}}:=Q^{\rm T}-{\rm diag}\{\vec{\gamma}\}$,
\begin{align}
{\rm d}\vec{Y}(t) & =M(t)\left(Q^{\rm T}\vec{Z}(t)\,{\rm d}t+{\rm d}\vec{K}(t)\right)+\vec{Z}(t)\left(\left(\vec{\alpha}^{\rm T}\vec{Z}(t)-\vec{\gamma}^{\rm T}\vec{Y}(t)\right){\rm d}t+\vec{\sigma}^{\rm T}\vec{Z}(t){\rm d}B(t)\right) \nonumber\\
& = \left(\bar{Q}_{\vec{\gamma}} \vec{Y}(t)+\diag(\alpha)\vec{Z}(t)
\right)\,\dd t +\diag\{\vec{\sigma}\}\vec{Z}(t){\rm d}B(t)+M(t)\,\dd \vec{K}(t).\label{eq:y}
\end{align}
Taking expectations of both sides, we obtain 
the system
\[\vec{\nu}'_t = \bar{Q}_{\vec{\gamma}}\vec{\nu}_t +{\rm diag}\{\vec{\alpha}\}\vec{p}_t.\]
This is a non-homogeneous linear system of differential equations, that is solved by\[\vec{\nu}_t=e^{\bar{Q}_{\vec{\gamma}}t}\vec{\nu}_0+\int_0^t e^{\bar Q_{\vec{\gamma}}(t-s)}{\rm diag}\{\vec{\alpha}\}\vec{p}_s{\rm d}s;\] then $\mu_t=\vec{1}^{\rm T}{\vec{\nu}_t}.$ Realize that $\vec{\nu}_0 =m_0\vec{p}_0$, as we assumed that $M(0)$ equals $m_0.$

The equations simplify drastically if the background process starts off in equilibrium at time $0$; then evidently $\vec{p}_t=\vec{\pi}$ for all $t\ge 0$. As a result,
we find $\vec{\nu}_t=e^{\bar{Q}_{\vec{\gamma}}t}\vec{\nu}_0-\bar{Q}_{\vec{\gamma}}^{-1}(I-e^{\bar{Q}_{\vec{\gamma}}t}){\rm diag}\{\vec{\alpha}\}\vec{\pi}.$

We now consider the steady-state regime (i.e., $t\to\infty$).
From the above expressions, it immediately follows that $\vec{\nu}_\infty=-\bar{Q}_{\vec{\gamma}}^{-1}{\rm diag}\{\vec{\alpha}\}\vec{\pi}$, and $\mu_\infty =\vec{1}^{\rm T}\vec{\nu}_\infty = -\vec{1}^{\rm T}\bar{Q}_{\vec{\gamma}}^{-1}{\rm diag}\{\vec{\alpha}\}\vec{\pi}$.
We further note that $\vec{\gamma}=-(Q-{\rm diag}\{\vec{\gamma}\})\vec{1}$, and hence $\vec{\gamma}^{\rm T}\bar{Q}_{\vec{\gamma}}^{-1}=-\vec{1}^{\rm T}$, so that $\vec{\gamma}^{\rm T}\vec{\nu}_\infty= \vec{\pi}^{\rm T}\vec{\alpha}.$

\item[$\rhd$] The variance can be found in a similar way. Define $\bar{\vec{Y}}(t):=\vec{Z}(t) M^2(t)$, and $\vec{w}_t:={\mathbb E}\bar{\vec{Y}}(t).$
Now our starting point is the relation
\[{\rm d}(M(t)-\mu_t)=\left(\vec{\alpha}^{\rm T}(\vec{Z}(t)-\vec{p}_t)-\vec{\gamma}^{\rm T}(\vec{Y}(t)-\vec{\nu}_t)\right){\rm d}t +\vec{\sigma}^{\rm T}\vec{Z}(t){\rm d}B(t),\]
so that
\begin{eqnarray*}
{\rm d}(M(t)-\mu_t)^2&=&2(M(t)-\mu_t)\left(\vec{\alpha}^{\rm T}(\vec{Z}(t)-\vec{p}_t)-\vec{\gamma}^{\rm T}(\vec{Y}(t)-\vec{\nu}_t)\right){\rm d}t \\
&&+\:2(M(t)-\mu_t) \vec{\sigma}^{\rm T}\vec{Z}(t)\,{\rm d}B(t)\,+\, \vec{\sigma}^{\rm T}{\rm diag}\{\vec{Z}(t)\}\vec{\sigma}\,{\rm d}t.
\end{eqnarray*}
Taking expectations of both sides,
\[v_t'=2\vec{\alpha}^{\rm T}\vec{\nu}_t-2\mu_t\vec{\alpha}^{\rm T}\vec{p}_t -2\vec{\gamma}^{\rm T}\vec{w}_t+2\mu_t\vec{\gamma}^{\rm T}\vec{\nu}_t+\vec{\sigma}^{\rm T}{\rm diag}\{\vec{p}_t\}\vec{\sigma}.\]
Clearly, to evaluate this expression, we first need to identify $\vec{w}_t$. To this end, we set up and equation for ${\rm d}\bar{\vec{Y}}(t)$ as before, take expectations, so as to obtain
\[\vec{w}'_t =\bar{Q}_{2\vec{\gamma}} \vec{w}_t + 2\,{\rm diag}\{\vec{\alpha}\}\vec{\nu}_t + {\rm diag}\{\vec{\sigma}^2\}\vec{p}_t;\]
here $\vec{\sigma}^2$ is the vector $(\sigma_1^2,\ldots,\sigma_d^2)^{\rm T}$. 
This leads to
\begin{equation}
\label{WW}\vec{w}_t=e^{\bar{Q}_{2\vec{\gamma}}t}
\vec{w}_0+
\int_0^t e^{\bar Q_{\vec{2\gamma}}(t-s)}\left(
2\,{\rm diag}\{\vec{\alpha}\}\vec{\nu}_s + {\rm diag}\{\vec{\sigma}^2\}\vec{p}_s\right)
{\rm d}s,\end{equation} so that $v_t=\vec{1}^{\rm T}{\vec{w}_t}-\mu_t^2.$ Observe that $\vec{w}_0=m_0^2\vec{p}_0.$

Again simplifications can be made if $\vec{p}_0=\vec{\pi}$ (and hence $\vec{p}_t=\vec{\pi}$ for all $t\geqslant 0$). In that case, we had already found an expression for $\vec{\nu}_s$ above, and as a result (\ref{WW}) can be explicitly evaluated.

For the stationary situation ($t\to\infty$, that is)
we obtain \
\[\vec{w}_\infty = -\bar{Q}_{2\vec{\gamma}}^{-1}\left(2\,{\rm diag}\{\vec{\alpha}\}\vec{\nu}_\infty +
{\rm diag}\{\vec{\sigma}^2\}\vec{\pi}\right),\]
and $v_\infty=\vec{1}^{\rm T}{\vec{w}_\infty}-\mu_\infty^2.$
\end{itemize}

We consider now an even more special case: $\gamma_i\equiv\gamma$ for all $i$ (in addition to $\vec{p}_t={\vec{\pi}}$; we let $t\geqslant 0$). It is directly seen that $\mu_\infty= \vec{\pi}^{\rm T}\vec{\alpha}/\gamma.$
Note that $\vec{\gamma}^{\rm T}\bar{Q}_{\vec{\gamma}}^{-1}=-\vec{1}^{\rm T}$ implies
$\vec{1}^{\rm T}\bar{Q}_{\delta\vec{1}}^{-1}=-\delta^{-1}\vec{1}^{\rm T}$ for any $\delta>0$, so that
\begin{eqnarray*}v_\infty&=&\vec{1}^{\rm T}{\vec{w}_\infty}-\mu_\infty^2=\frac{\vec{1}^{\rm T}{\rm diag}\{\vec{\alpha}\}\vec{\nu}_\infty}{\gamma}+\frac{\vec{\pi}^{\rm T}\vec{\sigma}^2}{2\gamma}-\left(\frac{\vec{\pi}^{\rm T}\vec{\alpha}}{\gamma}\right)^2\\
&=&-\frac{\vec{1}^{\rm T}{\rm diag}\{\vec{\alpha}\} \bar{Q}_{\gamma\vec{1}}^{-1}{\rm diag}\{\vec{\alpha}\}\vec{\pi}  }{\gamma}+\frac{\vec{\pi}^{\rm T}\vec{\sigma}^2}{2\gamma}-\left(\frac{\vec{\pi}^{\rm T}\vec{\alpha}}{\gamma}\right)^2
.\end{eqnarray*}
Now observe that, with $\check{D}_{ij}(\gamma):=\int_0^\infty p_{ij}(v)e^{-\gamma v}{\rm d}v$ for $\gamma>0$,
integration by parts yields
\[Q\check{D}(\gamma)=
\int_0^\infty QP(v)e^{-\gamma v}{\rm d}v= \int_0^\infty P'(v)e^{-\gamma v}{\rm d}v=
-I+\int_0^\infty \gamma P(v)e^{-\gamma v}{\rm d}v= -I+\gamma \check{D}(\gamma).
\]
As a consequence, $-(Q-\gamma I)\check{D}(\gamma) = I,$ so that
\[v_\infty= \frac{\vec{\pi}^{\rm T}\vec{\sigma}^2}{2\gamma}
+\frac{1}{\gamma}\vec{\alpha}^{\rm T}{\rm diag}\{\vec{\pi}\}\check{D}(\gamma)\vec{\alpha}
-\left(\frac{\vec{\pi}^{\rm T}\vec{\alpha}}{\gamma}\right)^2,\]
which, with $D_{ij}(\gamma):=\int_0^\infty (p_{ij}(v)-\pi_j)e^{-\gamma v}{\rm d}v=
\check{D}_{ij}(\gamma)-\pi_j/\gamma$, eventually leads to
\[v_\infty= \frac{\vec{\pi}^{\rm T}\vec{\sigma}^2}{2\gamma}
+\frac{1}{\gamma}\vec{\alpha}^{\rm T}{\rm diag}\{\vec{\pi}\}{D}(\gamma)\vec{\alpha}
.\]
The next subsection further studies the case in which the $\gamma_i$s are equal, i.e.,
$\gamma_i\equiv \gamma$, and the background process is in steady state at time $0$, i.e.,  and $\vec{p}_t=\vec{\pi}$.
As it turns out, under these conditions the mean and variance can also be found by an alternative elementary, insightful argumentation.

\subsection{Mean and variance: special case of equal ${\boldsymbol \gamma}$, starting in equilibrium}\label{SPECC}
In this subsection we consider the special case $\gamma_i\equiv \gamma$ for all $i$, while the Markov chain $X(t)$ starts off in equilibrium at time $0$ (so that ${\mathbb P}(X(t)=i)={\mathbb P}(X(0)=i)=\pi_i$ for all $t\geqslant 0$). In this special case we can evaluate $\mu_t$ and $v_t$ rather explicitly, particularly when in addition particular scalings are imposed.

We first concentrate on computing the transient mean $\mu_t$.
We denote by $X$ the path $(X(s), s\in [0,t]).$
Now using the representation of Thm.\ \ref{PROP}, and recalling the standard fact that $\mu_t$ can be written as ${\mathbb E} \,(\,{\mathbb E}(M(t)\,|\,X))$,
it is immediately seen  that $\mu_t$ can be written as a convex mixture of $m_0$ and ${\boldsymbol\pi}^{\rm T}{\boldsymbol\alpha}/\gamma$:
\[\mu_t = m_0e^{-\gamma t} + e^{-\gamma t}\int_0^t e^{\gamma s}{\rm d}s
\left(\sum_{i=1}^d \pi_i\alpha_i  \right)=m_0e^{-\gamma t} +\frac{ {\boldsymbol\pi}^{\rm T}{\boldsymbol\alpha}}{\gamma}\,(1-e^{-\gamma t});\]
use that $(X(t))_{t\geqslant 0}$ started off in equilibrium at time $0$. This expression
converges, as $t\to\infty$, to the stationary mean ${\boldsymbol\pi}^{\rm T}{\boldsymbol\alpha}/\gamma$, as expected.

The variance $v_t$ can be computed similarly, relying on the so called {\it law of total variance}, which says that ${\mathbb V}{\rm ar}\,M(t)=
{\mathbb E}\left({\mathbb V}{\rm ar}(M(t)\,|\,X)\right)+{\mathbb V}{\rm ar}\left({\mathbb E}(M(t)\,|\,X)\right)$.
Regarding the first term, it is seen that Thm.\ \ref{PROP} directly yields
\begin{eqnarray*}{\mathbb E}\left({\mathbb V}{\rm ar}(M(t)\,|\,X)\right)&=&{\mathbb E}\left(\int_0^t
e^{-2\gamma(t-s)}\sigma^2_{X(s)}\,{\rm d}s \right)\\
&=&\int_0^t
e^{-2\gamma(t-s)}{\mathbb E}\left(\sigma^2_{X(s)}\right)\,{\rm d}s=\sum_{i=1}^d \pi_i\sigma_i^2\left(\frac{1-e^{-2\gamma t}}{2\gamma}\right).
\end{eqnarray*}
Along similar lines,
\begin{eqnarray*}{\mathbb V}{\rm ar}\left({\mathbb E}(M(t)\,|\,X)\right)&=&
{\mathbb V}{\rm ar}\left(\int_0^t e^{-\gamma(t-s)}\alpha_{X(s)}\,{\rm d}s\right)\\&=&
\int_0^t\int_0^t {\mathbb C}{\rm ov}\left(e^{-\gamma(t-s)}\alpha_{X(s)},
 e^{-\gamma(t-u)}\alpha_{X(u)}\right){\rm d}u\,{\rm d}s\\&=&
e^{-2\gamma t}\int_0^t\int_0^t e^{\gamma (s+u)}{\mathbb C}{\rm ov}\left(\alpha_{X(s)},
\alpha_{X(u)}\right){\rm d}u\,{\rm d}s.
\end{eqnarray*}
The latter integral expression can be made more explicit. Recalling that $(X(t))_{t\geqslant 0}$ started off in equilibrium at time $0$, it can be evaluated as
\begin{eqnarray*}\lefteqn{2e^{-2\gamma t}\int_0^t\int_0^s e^{\gamma (s+u)}{\mathbb C}{\rm ov}\left(\alpha_{X(s)},
\alpha_{X(u)}\right){\rm d}u\,{\rm d}s}\\
&=& 2e^{-2\gamma t}\int_0^t\int_0^s e^{\gamma (s+u)}\sum_{i=1}^d\sum_{j=1}^d\alpha_i\alpha_j
\pi_i(p_{ij}(s-u)-\pi_j){\rm d}u\,{\rm d}s\\
&=&\frac{1}{\gamma} \sum_{i=1}^d\sum_{j=1}^d\alpha_i\alpha_j
\int_0^t \left(e^{-\gamma v}-e^{-\gamma(2t-v)}\right)\pi_i(p_{ij}(v)-\pi_j){\rm d}v
\end{eqnarray*}
(where the last equation follows after changing the order of integration and some elementary calculus). We arrive at the following result.
\begin{proposition} \label{PROP2}
For $t\ge 0$,
\[\mu_t = m_0e^{-\gamma t} +\frac{ {\boldsymbol\pi}^{\rm T}{\boldsymbol\alpha}}{\gamma}\,(1-e^{-\gamma t}),\]
and
\[v_t =\sum_{i=1}^d \pi_i\sigma_i^2\left(\frac{1-e^{-2\gamma t}}{2\gamma}\right)+
 \sum_{i=1}^d\sum_{j=1}^d\alpha_i\alpha_j
\int_0^t \left(\frac{e^{-\gamma v}-e^{-\gamma(2t-v)}}{\gamma}\right)\pi_i(p_{ij}(v)-\pi_j){\rm d}v.\]
\end{proposition}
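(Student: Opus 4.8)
The plan is to condition on the entire background path $X=(X(s),\,s\in[0,t])$ and exploit the conditional Gaussianity furnished by Theorem~\ref{PROP}. Since $\gamma_i\equiv\gamma$, the random clock collapses to $\Gamma(t)=\gamma t$, so that $\Gamma(t)-\Gamma(s)=\gamma(t-s)$ becomes deterministic; the conditional mean and variance of Theorem~\ref{PROP} then read $\mu(t)=m_0e^{-\gamma t}+\int_0^t e^{-\gamma(t-s)}\alpha_{X(s)}\,\dd s$ and $v(t)=\int_0^t e^{-2\gamma(t-s)}\sigma^2_{X(s)}\,\dd s$. For the mean I would invoke the tower property $\mu_t=\ee(\ee(M(t)\,|\,X))=\ee\,\mu(t)$, and for the variance the law of total variance $v_t=\ee\,v(t)+{\mathbb V}{\rm ar}(\mu(t))$.

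The mean is immediate. Because $X$ starts in equilibrium, $\ee\,\alpha_{X(s)}=\sum_i\pi_i\alpha_i=\vec{\pi}^{\rm T}\vec{\alpha}$ for every $s\in[0,t]$; interchanging expectation and the (bounded, hence integrable) integrand gives $\mu_t=m_0e^{-\gamma t}+\vec{\pi}^{\rm T}\vec{\alpha}\int_0^t e^{-\gamma(t-s)}\,\dd s$, and the remaining elementary integral equals $(1-e^{-\gamma t})/\gamma$, yielding the first claimed formula.

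For the variance, the first contribution $\ee\,v(t)$ is handled exactly as the mean: pulling the expectation inside and using $\ee\,\sigma^2_{X(s)}=\vec{\pi}^{\rm T}\vec{\sigma}^2$ produces the term $\sum_i\pi_i\sigma_i^2\,(1-e^{-2\gamma t})/(2\gamma)$. The second contribution ${\mathbb V}{\rm ar}(\mu(t))$ drops the deterministic summand $m_0e^{-\gamma t}$ and equals ${\mathbb V}{\rm ar}\big(\int_0^t e^{-\gamma(t-s)}\alpha_{X(s)}\,\dd s\big)$, which I would expand as the double integral $e^{-2\gamma t}\int_0^t\int_0^t e^{\gamma(s+u)}\cov(\alpha_{X(s)},\alpha_{X(u)})\,\dd u\,\dd s$. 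The key probabilistic input is the stationary two-time covariance: for $u\le s$, using the Markov property together with $\PP(X(u)=i)=\pi_i$ gives
\[\cov(\alpha_{X(s)},\alpha_{X(u)})=\sum_{i=1}^d\sum_{j=1}^d\alpha_i\alpha_j\,\pi_i\,(p_{ij}(s-u)-\pi_j).\]

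The main (and only genuinely fiddly) obstacle is the reduction of this double integral to the single integral in the statement. Since the integrand is symmetric under $s\leftrightarrow u$, I would restrict to the triangle $u\le s$ and insert a factor $2$, then substitute $v=s-u$, after which the covariance depends on $v$ alone and the exponential weight becomes $e^{\gamma(2s-v)}$. Swapping the order of integration over $\{0\le v\le s\le t\}$ isolates the inner integral $\int_v^t e^{2\gamma s}\,\dd s=(e^{2\gamma t}-e^{2\gamma v})/(2\gamma)$; multiplying back by $2e^{-2\gamma t}e^{-\gamma v}$ collapses the weight to $(e^{-\gamma v}-e^{-\gamma(2t-v)})/\gamma$, which is precisely the stated expression. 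Everything past the covariance identity is elementary calculus, so the care lies entirely in the change of variables and the bookkeeping of the exponential factors.
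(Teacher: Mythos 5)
Your proposal is correct and follows essentially the same route as the paper: the tower property for the mean, the law of total variance for $v_t$, the stationary two-time covariance $\sum_{i,j}\alpha_i\alpha_j\pi_i(p_{ij}(s-u)-\pi_j)$, and the symmetrization plus change of variables $v=s-u$ with an interchange of integration order. The only difference is that you spell out the final calculus step that the paper compresses into ``after changing the order of integration and some elementary calculus,'' and your bookkeeping of the exponential factors checks out.
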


We conclude this section by considering two specific limiting regimes, to which we return in Section~\ref{Sec:TS} where we will derive limit distributions under parameter scalings.
\begin{itemize}
\item[$\rhd$]
Specializing to the situation that $t\to\infty$, we obtain
\[{\mathbb V}{\rm ar}\, M =\frac{ {\boldsymbol\pi}^{\rm T}{\boldsymbol\sigma}^2}{2\gamma}+
\frac{1}{\gamma}
\sum_{i=1}^d\sum_{j=1}^d\alpha_i\alpha_j \pi_iD_{ij}(\gamma)=
\frac{ {\boldsymbol\pi}^{\rm T}{\boldsymbol\sigma}^2}{2\gamma}+
\frac{1}{\gamma}
\vec{\alpha}^{\rm T}{\rm diag}\{\vec{\pi}\}D(\gamma)\vec{\alpha},\]
in accordance with the expression we found before.
\item[$\rhd$] Scale ${\boldsymbol\alpha}\mapsto N^h{\boldsymbol\alpha}$, ${\boldsymbol\sigma^2}\mapsto N^h{\boldsymbol\sigma^2}$, and $Q\mapsto NQ$ for some $h\ge 0$. We obtain that ${\mathbb V}{\rm ar} \,M(t)$ equals
\[
N^h\sum_{i=1}^d \pi_i\sigma_i^2\left(\frac{1-e^{-2\gamma t}}{2\gamma}\right)\hspace{-0.4mm}+{N^{2h}}\sum_{i=1}^d\sum_{j=1}^d\alpha_i\alpha_j\hspace{-0.4mm}
\int_0^t \left(\frac{e^{-\gamma v}-e^{-\gamma(2t-v)}}{\gamma}\right)\hspace{-0.4mm}\pi_i\hspace{-0.2mm}\left(p_{ij}(vN)-\pi_j\right)\hspace{-0.2mm}
{\rm d}v,\]
which for $N$ large behaves as
\begin{eqnarray}
\lefteqn{\hspace{-15mm}\left(\frac{1-e^{-2\gamma t}}{2\gamma}\right)\left(N^h\sum_{i=1}^d \pi_i\sigma_i^2+2N^{2h-1}\sum_{i=1}^d\sum_{j=1}^d\alpha_i\alpha_j\pi_iD_{ij}\right)}\nonumber\\&=&
\left(\frac{1-e^{-2\gamma t}}{2\gamma}\right)\left(N^h\vec{\pi}^{\rm T}\vec{\sigma}^2+2N^{2h-1}
\vec{\alpha}^{\rm T}{\rm diag}\{\vec{\pi}\}D\vec{\alpha}\right)\label{PD}
,\end{eqnarray}
where $D:=D(0)$ is the deviation matrix introduced in Section \ref{MOD}.

We observe an interesting dichotomy: for $h<1$ the variance is essentially linear in the `scale' of the {\sc ou} processes $N^h$, while for $h>1$ it behaves superlinearly in $N^h$ (more specifically, proportionally to $N^{2h-1}$). It is this dichotomy that also featured in earlier work on Markov-modulated infinite-server queues \cite{BMT}.

The intuition behind the dichotomy is the following. If $h<1$, then the timescale of the background process systematically exceeds that of the $d$ underlying {\sc ou} processes (that is, the background process is `faster'). As a result, the system essentially behaves as an {\it ordinary} (that is, non-modulated) {\sc ou} process with `time average' parameters $\alpha_\infty:={\boldsymbol \pi}^{\rm T}{\boldsymbol\alpha}$, $\gamma$, and $\sigma_\infty^2:={\boldsymbol \pi}^{\rm T}{\boldsymbol\sigma^2}$.
If $h>1$, on the contrary, the background process jumps at a slow rate, relative to the typical timescale of the {\sc ou} processes; as a result, the process $(M(t))_{t\geqslant 0}$ moves between multiple local limits (where the individual `variance coefficients' $\sigma_i^2$ do not play a role). \iffalse \footnote{\tt {\scriptsize MM: Peter proposed to include: "But note that we don't only upscale the process $M$ in time. We also dampen  the mean reversion behavior, effectively by replacing $\gamma_{X \left( t \right)}$ in \eqref{SDE:MMOU} with $\gamma_{X \left( t \right)}/n$ if time $t$ is replaced with $nt$ in order to prevent $M_{nt}$ converging to a stationary distribution." The more I read these lines, the less I understand them. Wouldn't the reader interpret them as if we DO impose some scaling on the $\gamma$s too (which we don't)? If $\gamma$ were scaled then "the process doesn't blow up by a factor $n$", which is what we want. I propose to either rephrase Peter's statement, or to leave it out.}\fi
\end{itemize}

Note that it follows from (\ref{PD}) that ${\rm diag}\{\vec{\pi}\}D$ is a nonnegative definite matrix, although singular and non-symmetric in general; more precisely, it is a consequence of the fact that (\ref{PD}) is a variance and hence nonnegative, in conjunction with the fact the we can pick 
${\boldsymbol\sigma^2}={\boldsymbol 0}$. Below we state and prove the nonnegativity by independent arguments; cf.\ \cite[Prop 3.2]{DAVE}.

\begin{proposition}\label{prop:nnd} 
The matrix $D^\mathrm{T}\diag\{\vec{\pi}\}+\diag\{\vec{\pi}\}D$ is symmetric and nonnegative definite. 
\end{proposition}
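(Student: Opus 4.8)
The plan is to treat the two assertions separately. Symmetry is immediate: writing $A := \diag\{\vec{\pi}\}D$, the matrix in question is $A^{\mathrm{T}}+A$, which is symmetric for any square $A$ (here using $\diag\{\vec{\pi}\}^{\mathrm{T}}=\diag\{\vec{\pi}\}$). For nonnegative definiteness, the associated quadratic form satisfies $\vec{x}^{\mathrm{T}}(A^{\mathrm{T}}+A)\vec{x}=2\,\vec{x}^{\mathrm{T}}\diag\{\vec{\pi}\}D\,\vec{x}$, so it suffices to prove $\vec{x}^{\mathrm{T}}\diag\{\vec{\pi}\}D\,\vec{x}\geqslant 0$ for every $\vec{x}\in\rr^d$.

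To this end I would exploit the integral representation (\ref{eq:d}). Fixing $\vec{x}$ and letting $X$ denote the stationary version of the background chain (i.e.\ $X(0)\sim\vec{\pi}$), a direct computation identifies the integrand with an autocovariance: writing $P(t):=\exp(Qt)$ and $\Pi:=\one\vec{\pi}^{\mathrm{T}}$, and using $\EE\,x_{X(0)}=\vec{\pi}^{\mathrm{T}}\vec{x}$ together with $\EE\,x_{X(0)}x_{X(t)}=\sum_{i,j}\pi_i x_i\,p_{ij}(t)\,x_j$, one finds
\[
\vec{x}^{\mathrm{T}}\diag\{\vec{\pi}\}\big(P(t)-\Pi\big)\vec{x}=\sum_{i,j}\pi_i x_i\big(p_{ij}(t)-\pi_j\big)x_j=\cov\big(x_{X(0)},\,x_{X(t)}\big).
\]
Integrating over $t$ then gives $\vec{x}^{\mathrm{T}}\diag\{\vec{\pi}\}D\,\vec{x}=\int_0^\infty\cov(x_{X(0)},x_{X(t)})\,\dd t$, so the claim reduces to showing that the time integral of the autocovariance of the stationary real-valued process $Y_t:=x_{X(t)}$ is nonnegative.

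This last fact I would establish by the standard variance-growth argument. For $T>0$, stationarity yields
\[
\mathrm{Var}\bigg(\int_0^T Y_t\,\dd t\bigg)=\int_0^T\!\!\int_0^T\cov(Y_s,Y_u)\,\dd s\,\dd u=2\int_0^T(T-\tau)\,\cov(Y_0,Y_\tau)\,\dd\tau.
\]
Dividing by $T$, letting $T\to\infty$, and observing that the left-hand side is nonnegative for every $T$, one obtains $\int_0^\infty\cov(Y_0,Y_\tau)\,\dd\tau=\tfrac12\lim_{T\to\infty}T^{-1}\mathrm{Var}(\int_0^T Y_t\,\dd t)\geqslant 0$, which is exactly what is needed.

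The only technical point is the passage to the limit: one must justify that $T^{-1}\int_0^T(T-\tau)\cov(Y_0,Y_\tau)\,\dd\tau\to\int_0^\infty\cov(Y_0,Y_\tau)\,\dd\tau$ and that the latter integral converges absolutely. Both follow from the exponential ergodicity of a finite, irreducible continuous-time chain, which forces $p_{ij}(\tau)-\pi_j$, and hence $\cov(Y_0,Y_\tau)$, to decay exponentially in $\tau$; dominated convergence then applies. Alternatively, the same conclusion follows from Bochner's theorem, since $\int_0^\infty\cov(Y_0,Y_\tau)\,\dd\tau$ is half the value at frequency zero of the (nonnegative) spectral density of the stationary process $Y$. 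I expect this convergence and domination bookkeeping to be the only genuine obstacle; the algebraic identities above are routine.
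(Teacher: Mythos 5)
Your proof is correct, but it takes a genuinely different route from the paper's. You work directly from the integral representation $D=\int_0^\infty(\exp(Qt)-\one\vec{\pi}^{\mathrm{T}})\,\dd t$, identify the quadratic form $\vec{x}^{\mathrm{T}}\diag\{\vec{\pi}\}D\vec{x}$ with the integrated autocovariance $\int_0^\infty\cov(x_{X(0)},x_{X(t)})\,\dd t$ of the stationary functional $Y_t=x_{X(t)}$, and deduce nonnegativity from the identity $\mathrm{Var}\big(\int_0^T Y_t\,\dd t\big)=2\int_0^T(T-\tau)\cov(Y_0,Y_\tau)\,\dd\tau\geqslant 0$ after dividing by $T$ and letting $T\to\infty$; the limit interchange is indeed harmless because $p_{ij}(\tau)-\pi_j$ decays exponentially for a finite ergodic chain, so all the pieces are in place. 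The paper argues differently: it first shows that $Q^{\mathrm{T}}\diag\{\vec{\pi}\}+\diag\{\vec{\pi}\}Q$ is nonpositive definite, by applying the product rule to $\vec{Z}(t)\vec{Z}(t)^{\mathrm{T}}=\diag\{\vec{Z}(t)\}$ and taking stationary expectations, which yields $Q^{\mathrm{T}}\diag\{\vec{\pi}\}+\diag\{\vec{\pi}\}Q=-\ee P_t$ with $P_t$ the (nonnegative definite) density of the predictable quadratic variation of $\vec{Z}$; it then transfers this to $D$ purely algebraically via $DQ=\Pi-I$ and $\vec{\pi}^{\mathrm{T}}D=0$. Your approach is more elementary and self-contained (no stochastic calculus, only the ergodic integral representation of $D$ and the nonnegativity of a variance), and it makes transparent the probabilistic meaning of the quadratic form as the asymptotic variance of an additive functional; in fact it formalizes precisely the "it is a variance, take $\vec{\sigma}^2=\vec{0}$" remark the authors make just before the proposition and then deliberately set aside in favour of "independent arguments." What the paper's route buys is the intermediate identity for $Q^{\mathrm{T}}\diag\{\vec{\pi}\}+\diag\{\vec{\pi}\}Q$ itself, which is reused later (Step 2 of the proof of their functional CLT) to compute the limiting quadratic variation of the martingale $\vec{K}^{[N]}/\sqrt{N}$, so their detour is not gratuitous. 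The only point worth making explicit in your write-up is that the stationary version of $X$ exists and $p_{ij}(\tau)\to\pi_j$ exponentially, i.e.\ the implicit ergodicity assumption under which $D$ is well defined in the first place.
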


\begin{proof}
First we prove the claim that the matrix $Q^\mathrm{T}\diag\{\vec{\pi}\}+\diag\{\vec{\pi}\}Q$ is (symmetric and) nonpositive definite. To that end we start from the semimartingale decomposition \eqref{eq:zz} for $\vec{Z}$.
By the product rule we obtain, collecting all the martingale terms in $\dd M(t)$,
\[
\dd (\vec{\vec{Z}}(t)\vec{Z}(t)^\mathrm{T})= Q^\mathrm{T} \vec{Z}(t)\vec{Z}(t)^\mathrm{T}\,\dd t + \vec{Z}(t)\vec{Z}(t)^\mathrm{T} Q\,\dd t +\dd\langle \vec{Z}\rangle_t +\dd M(t).
\]
As the predictable quadratic variation of $\vec{Z}$ is absolutely continuous and increasing, we can write $\dd\langle \vec{Z}\rangle_t=P_t\,\dd t$, where $P_t$ is a nonnegative definite matrix. Next we make the obvious observation that $\vec{Z}(t)\vec{Z}(t)^\mathrm{T}=\diag\{\vec{Z}(t)\}$. Hence we have by combining  \eqref{eq:zz} and the above display
\[
\diag\{Q^\mathrm{T} \vec{Z}(t)\}=Q^\mathrm{T}\diag\{\vec{Z}(t)\}+\diag\{\vec{Z}(t)\}Q+P_t.
\]
Taking expectations w.r.t.\ the stationary distribution of $\vec{Z}_t$ and using $Q^\mathrm{T}\pi=0$, we obtain
\[
0=Q^\mathrm{T}\diag\{\vec{\pi}\}+\diag\{\vec{\pi}\}Q+\ee P_t,
\]
from which it follows that $Q^\mathrm{T}\diag\{\vec{\pi}\}+\diag\{\vec{\pi}\}Q$ is (symmetric and) nonpositive definite. 

This in turn implies that $-D^\mathrm{T}(Q^\mathrm{T}\diag\{\vec{\pi}\}+\diag\{\vec{\pi}\}Q)D$ is symmetric and nonnegative definite. 
Recall now that $FQ=\Pi-I$ and hence $DQ=\Pi-I$.
Then $D^\mathrm{T} Q^\mathrm{T}\diag\{\vec{\pi}\}D= -(\diag\{\vec{\pi}\}-\vec{\pi}\vec{\pi}^\mathrm{T})D$. But $\vec{\pi}^\mathrm{T} D=0$, so $D^\mathrm{T} Q^\mathrm{T}\diag\{\vec{\pi}\}D= -\diag\{\vec{\pi}\}D$. The result now follows.
\end{proof}

\subsection{Covariances}
In this subsection we point out how to compute the covariance
\[c(t,u) :={\mathbb C}{\rm ov}\,(M(t), M(t+u)),\]
for $t,u\geqslant 0.$ To this end, we observe that by applying a time shift, we first assume in the computations to follow that $t=0$ ,and we consider $c(t):=\cov(M(t),M(0))$. Below we make frequently use of the additional quantities $\vec{C}(t)=\cov(\vec{Y}(t),M(0))$ and $\vec{B}(t)=\cov(\vec{Z}(t),M(0))$. Note that $c(t)=\one^{\rm T} \vec{C}(t)$. Multiplying Equations \eqref{eq:zz} and~\eqref{eq:y} by $M(0)$, we obtain upon  taking expectation the following system of {\sc ode}\,s:
\[
\begin{pmatrix}
\vec{B}'(t) \\
\vec{C}'(t)
\end{pmatrix}
=R\begin{pmatrix}
B(t) \\
C(t)
\end{pmatrix},\:\:\:\:\mbox{where}\:\:\:\:
R:=
\begin{pmatrix}
Q^{\rm T} & 0 \\
\diag\{\vec{\alpha}\} & \bar{Q}_{\vec{\gamma}}
\end{pmatrix}
\begin{pmatrix}
\vec{B}(t) \\
\vec{C}(t)
\end{pmatrix}
\]
with initial conditions $\vec{B}(0)=\cov(\vec{Z}(0),M(0))$ and $\vec{C}(0)=\cov(\vec{Y}(0),M(0))$. In a more compact and obvious notation, we have
$
\vec{A}'(t)=RA(t), 
$
and hence $
\vec{A}(t)
=\exp(Rt)
\vec{A}(0)$.

Likewise we can compute 
\[
A(t,u) :=
\begin{pmatrix}
\cov(\vec{Z}(t+u),M(t)) \\
\cov(\vec{Y}(t+u),M(t))
\end{pmatrix} =\exp(Ru)
\begin{pmatrix}
\cov(\vec{Z}(t),M(t)) \\
\cov(\vec{Y}(t),M(t))
\end{pmatrix} .
\]
It remains to derive an expression for the last covariances. For $\cov(\vec{Z}(t),M(t))$ we need $\ee M(t)\vec{Z}(t)=\ee \vec{Y}(t)$ and $\ee M(t)$, $\ee\vec{Z}(t)$. For $\cov(\vec{Y}(t),M(t))$ we need $\ee M(t)\vec{Y}(t)=\ee M(t)^2\vec{Z}(t)$, $\ee \vec{Y}(t)$ and $\ee M(t)$. All these quantities have  been obtained in Section~\ref{section:gc}.

%%%%%%%%%%%%%%%%%%%%%%%%%%%%%%
\subsection{Recursive scheme for higher order moments}\label{ss:rec}

The objective of this section is to set up a recursive scheme to generate all transient moments, that is, the expected value of $M(t)^k$, for any $k\in\{1,2,\ldots\}$, jointly with the indicator function $1\{X(t)=i\}$. To that end we consider the expectation of $(M(t))^k\,\vec{Z}(t)$. 
First we rewrite Equation~\eqref{SDE:MMOU} as
\begin{equation}\label{eq:mmouz}
{\rm d} M \left( t \right) = \left( \vec{\alpha}^{\rm T} \vec{Z}(t) - \vec{\gamma}^{\rm T} \vec{Z}(t)X(t)\right) \, {\rm d} t + \vec{\sigma}^{\rm T} \vec{Z}(t) \, {\rm d} B \left( t \right).
\end{equation}
It\^o's lemma and \eqref{eq:mmouz}
directly yield
\begin{align*}
{\rm d}(M(t))^k & = k(M(t))^{k-1}\left( \vec{\alpha}^{\rm T} \vec{Z}(t) - \vec{\gamma}^{\rm T} \vec{Z}(t)X(t)\right) \, {\rm d} t + k(M(t))^{k-1}\vec{\sigma}^{\rm T} \vec{Z}(t) \, {\rm d} B \left( t \right) \\
& \qquad +\frac{1}{2}k(k-1)(M(t))^{k-2}\vec{\sigma}^{\rm T}\diag\{\vec{Z}(t)\}\vec{\sigma}\,{\rm d} t.
\end{align*}
Then we apply the product rule to $M(t)^k\vec{Z}(t)$, together with the just obtained equation and Equation~\eqref{eq:zz}, so as to obtain 
\begin{eqnarray*}
{\rm d}\left((M(t))^k\vec{Z}(t)\right) & =& k(M(t))^{k-1}\left( \diag\{\vec{\alpha}\}\vec{Z}(t) - \diag\{\vec{\gamma}\} \vec{Z}(t)M(t)\right) \, {\rm d} t \\
&&\hspace{-2mm}+ \:k(M(t))^{k-1}\diag\{\vec{\sigma}\}\vec{Z}(t) \, {\rm d} B \left( t \right) +\frac{1}{2}k(k-1)(M(t))^{k-2}\diag\{\vec{\sigma}^2\}\vec{Z}(t)\,{\rm d} t \\&&\hspace{-2mm}+\: (M(t))^k\left(Q^{\rm T} \vec{Z}(t)\,{\rm d}t+{\rm d}K(t)\right).
\end{eqnarray*}
All martingale terms on the right are genuine martingales and thus have expectation zero. Putting $\vec{H}_k(t):=\mathbb{E} M(t)^k\vec{Z}(t)$, we get the following recursion in {\sc ode} form:
\begin{eqnarray*}
\frac{{\rm d}}{{\rm d}t}\vec{H}_k(t) & =&k\diag\{\vec{\alpha}\}\vec{H}_{k-1}(t)-k\diag\{\vec{\gamma}\}\vec{H}_k(t)+
\frac{1}{2}k(k-1)\diag\{\vec{\sigma}^2\}\vec{H}_{k-2}(t) + Q^{\rm T} \vec{H}_k(t) \\
& =& \bar{Q}_{k\gamma}\vec{H}_k(t) + k\diag\{\vec{\alpha}\}\vec{H}_{k-1}(t)+
\frac{1}{2}k(k-1)\diag\{\vec{\sigma}^2\}\vec{H}_{k-2}(t).
\end{eqnarray*}
Stacking $\vec{H}_0(t),\ldots,\vec{H}_{n}(t)$ into a single vector $\bar{\vec{H}}_n(t)$, we obtain the differential equation
\[
\frac{{\rm d}}{{\rm d}t}\bar{\vec{H}}_n(t)=A_n\bar{\vec{H}}_n(t),
\]
with  $A_n\in\mathbb{R}^{(n+1)d\times (n+1)d}$ denoting a lower block triangular matrix, whose solution is $\vec{H}_n(t)=\exp(A_nt)\vec{H}_n(0)$.
Eventually, $h_k(t):=\mathbb{E}M(t)^k$ is given by $h_k(t)=\mathbf{1}^{\rm T} \vec{H}_k(t)$.
Note that for $k=1,2$ the results of Section~\ref{TB} can be recovered.

%%%%%%%%%%%%%%%%%%%%%%%%%%%%%%

\section{Transient behavior: partial differential equations} \label{TBPDE}
The goal of this section is to characterize, for  a given vector  $\vec{t}\in{\mathbb R}^K$ (with $K\in{\mathbb N}$) such that $0\leqslant t_1\leqslant \cdots\leqslant t_K$, the Laplace transform of $(M(t+t_1), \ldots, M(t+t_K))$ (together with the state of the background process at these time instances). More specifically, we set up a system of {\sc pde}\,s for
\[g_{\vec{i}}(\vec{\vt},{t}) := {\mathbb E} e^{-(\vt_1M(t_1+t)+\cdots+ \vt_K M(t_K+t))} 1\{X(t_1+t)=i_1,\ldots, X(t_K+t)=i_K\};\]
here $t\geqslant 0$, $\vec{i}\in \{1,\ldots,d\}^K$ and $\vec{\vt}\in{\mathbb R}^K.$
The system of {\sc pde}\,s is with respect to $t$ and $\vt_1$ up to $\vt_K.$
We first point out the line of reasoning for the case $K=1$, and then present the {\sc pde}  for $K=2$. The cases $K\in\{3,4,\ldots\}$ can be dealt with fully analogously, but lead to notational inconveniences and are therefore left out.

It is noted that the stationary version of the result below (i.e., $t\to\infty$) for the special case $K=1$ has appeared in \cite{XING}
(where we remark that in \cite{XING} the additional issue of reflection at $0$ has been incorporated). 

%To derive these results several approaches can be followed, for instance the operator-based method of \cite{XING}, but owing to its insightful structure, we have decided to rely on a `$\Delta t$-argument'.

%%%%%%%%%%%%%%%%%%%%%%%%%%%%%%%%

\subsection{Fourier-Laplace transform}\label{FLT}

For $K=1$, the object of interest is
\[g_i(\vt,t) := {\mathbb E} e^{-\vt M(t)} 1\{X(t)=i\},\]
for $i=1,\ldots,d$; realize that, without loss of generality, we have taken $t_1=0.$ For a more compact notation we stack the $g_i$ in a single vector ${\boldsymbol g}$, so ${\boldsymbol g}(\vt,t) = {\mathbb E} e^{-\vt M(t)} \vec{Z}(t)$. Replacing in this expression $\vt$ by $-\ii u$ for $u\in\rr$ gives the characteristic function of $M(t)$ jointly with $\vec{Z}(t)$. 

\begin{theorem} \label{PDEth} Consider the case $K=1$ and $t_1=0$. The Laplace transforms ${\boldsymbol  g}(\vt,t)$ satisfy the following system of \,{\sc pde}\,s:

\begin{equation}\label{eq:cfpde}
\frac{\partial}{\partial t}{\boldsymbol g}(\vt,t)=Q^{\rm T}{\boldsymbol  g}(\vt,t)-\left(\vt\,{\rm diag}\{{\boldsymbol \alpha}\}-\frac{1}{2}\vt^2{\rm diag}\{{\boldsymbol \sigma}^2\}\right)
{\boldsymbol g}(\vt,t)
-\vt\,{\rm diag}\{{\boldsymbol \gamma}\}\,\frac{\partial}{\partial \vt}{\boldsymbol g}(\vt,t).
\end{equation}
The corresponding initial conditions are ${\boldsymbol g}(0,t)={\boldsymbol p}_t$ and ${\boldsymbol g}(\vt,0) = e^{-\vt m_0}{\boldsymbol p}_0.$
\end{theorem}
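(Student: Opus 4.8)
The plan is to derive the PDE directly from the stochastic dynamics by applying It\^o's formula to the function $e^{-\vt M(t)}$, exactly paralleling the recursion in Section~\ref{ss:rec} but keeping $\vt$ as a fixed parameter rather than expanding in powers. First I would apply It\^o's lemma to $\varphi(M(t)) = e^{-\vt M(t)}$, using \eqref{eq:mmouz} for $\dd M(t)$. Since $\varphi'(x) = -\vt e^{-\vt x}$ and $\varphi''(x) = \vt^2 e^{-\vt x}$, this yields
\begin{align*}
\dd e^{-\vt M(t)} &= -\vt e^{-\vt M(t)}\left(\vec{\alpha}^{\rm T}\vec{Z}(t) - \vec{\gamma}^{\rm T}\vec{Z}(t)M(t)\right)\dd t - \vt e^{-\vt M(t)}\vec{\sigma}^{\rm T}\vec{Z}(t)\,\dd B(t)\\
&\qquad + \tfrac{1}{2}\vt^2 e^{-\vt M(t)}\vec{\sigma}^{\rm T}\diag\{\vec{Z}(t)\}\vec{\sigma}\,\dd t.
\end{align*}

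Next I would form the product $e^{-\vt M(t)}\vec{Z}(t)$ and apply the product rule, combining the display above with the semimartingale decomposition \eqref{eq:zz} for $\vec{Z}(t)$; because $B$ and $\vec{K}$ are driven by independent sources there is no extra cross-variation term. Collecting the $\dd t$ terms and discarding the genuine martingale parts (which have zero expectation), taking expectations gives an evolution equation for ${\boldsymbol g}(\vt,t) = \ee\, e^{-\vt M(t)}\vec{Z}(t)$. The $Q^{\rm T}$ term comes from the drift of $\vec{Z}(t)$, the $\vt\,\diag\{\vec{\alpha}\}$ and $\tfrac{1}{2}\vt^2\diag\{\vec{\sigma}^2\}$ terms come from the two It\^o contributions involving $\vec{\alpha}$ and $\vec{\sigma}$ respectively (using $\diag\{\vec{Z}(t)\}\vec{Z}(t) = \vec{Z}(t)$ componentwise so that $\vec{\sigma}^{\rm T}\diag\{\vec{Z}(t)\}\vec{\sigma}$ against $\vec{Z}(t)$ reduces to $\diag\{\vec{\sigma}^2\}\vec{Z}(t)$).

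The one subtle point is the mean-reversion term $\vt\,\vec{\gamma}^{\rm T}\vec{Z}(t)M(t)\,e^{-\vt M(t)}$, which carries an extra factor of $M(t)$. The key observation is that $M(t)\,e^{-\vt M(t)} = -\tfrac{\partial}{\partial\vt}e^{-\vt M(t)}$, so after multiplying by $\vec{Z}(t)$ and taking expectations this term becomes $\vt\,\diag\{\vec{\gamma}\}\tfrac{\partial}{\partial\vt}{\boldsymbol g}(\vt,t)$, which is precisely the first-order term in $\vt$ appearing in \eqref{eq:cfpde}. This is exactly what turns the ODE system of the moment recursion into a genuine \emph{partial} differential equation: differentiation in $\vt$ plays the role that the downward shift $k\mapsto k-1$ played in Section~\ref{ss:rec}. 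I expect this to be the main conceptual step; the rest is bookkeeping of the It\^o terms.

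Finally I would verify the initial and boundary conditions. Setting $\vt = 0$ gives ${\boldsymbol g}(0,t) = \ee\,\vec{Z}(t) = {\boldsymbol p}_t$, the vector of transient state probabilities. Setting $t = 0$ and using the deterministic initial value $M(0) = m_0$ together with independence of $X(0)$ from $M_0$ gives ${\boldsymbol g}(\vt,0) = e^{-\vt m_0}\ee\,\vec{Z}(0) = e^{-\vt m_0}{\boldsymbol p}_0$. The only technical care needed is to justify interchanging expectation with the $\vt$-derivative and confirming that the stochastic integrals are true martingales; since $X$ lives on a finite state space the coefficients are bounded, and for each fixed $\vt \geqslant 0$ the integrand $e^{-\vt M(t)}$ has finite moments by the Gaussian conditional law of Theorem~\ref{PROP}, so dominated convergence applies on compact $\vt$-intervals and the martingale property follows from the usual localization argument.
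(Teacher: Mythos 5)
Your proposal is correct and follows essentially the same route as the paper's own proof: It\^o's formula applied to $e^{-\vt M(t)}$, the product rule with $\vec{Z}(t)$ via \eqref{eq:zz}, taking expectations to kill the martingale terms, and identifying $\ee\left(M(t)e^{-\vt M(t)}\vec{Z}(t)\right)$ with $-\partial{\boldsymbol g}/\partial\vt$ to produce the first-order term in $\vt$. Your added remarks on the interchange of expectation and $\vt$-differentiation and on the true-martingale property are sound and slightly more careful than the paper, which passes over these points silently.
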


\begin{proof} The proof mimics the procedure used in Section \ref{ss:rec} to determine the moments of $M(t).$
Letting $f(\vt,t)=e^{-\vt M(t)}$, applying It\^o's formula to \eqref{eq:mmouz} yields
\[
\dd f(\vt,t)=-\vt f(\vt,t)\left(\left( \vec{\alpha}^{\rm T} \vec{Z}(t) - \vec{\gamma}^{\rm T} \vec{Z}(t)M(t)\right) \, {\rm d} t + \vec{\sigma}^{\rm T} \vec{Z}(t) \, {\rm d} B \left( t \right)\right)+\half\vt^2f(\vt,t)\diag\{\vec{\sigma}^2\}\vec{Z}(t)\dd t.
\]
We then apply the product rule to $f(\vt,t)\vec{Z}(t)$, using the just obtained equation in combination with Equation \eqref{eq:zz}. This leads to
\begin{align*}
\dd \left(f(\vt,t)\vec{Z}(t)\right)
& = -\vt f(\vt,t)\left(\left( \diag\{\vec{\alpha}\} \vec{Z}(t) - \diag\{\vec{\gamma}\} \vec{Z}(t)M(t)\right) \, {\rm d} t 
+ \diag\{\vec{\sigma}\} \vec{Z}(t) \, {\rm d} B \left( t \right)\right) \\\
& \qquad+\half\vt^2f(\vt,t)\diag\{\vec{\sigma}^2\}\vec{Z}(t)\dd t +
f(\vt,t)\left(Q^{\rm T} \vec{Z}(t)\,{\rm d}t+{\rm d}K(t)\right).
\end{align*}
Taking expectations, and recalling that  ${\boldsymbol g}(\vt,t)=\ee f(\vt,t)\vec{Z}(t)$ and that the martingale terms have expectation zero, we obtain
\begin{eqnarray*}
\frac{\partial}{\partial t} {\boldsymbol g}(\vt,t) & = &
-\vt \diag\{\vec{\alpha}\}{\boldsymbol g}(\vt,t)+\vt \diag\{\vec{\gamma}\} \ee\left(f(\vt,t)M(t)\vec{Z}(t)\right)   \\
%\\
%& \qquad
&&+\:\half\vt^2\diag\{\vec{\sigma}^2\} f(\vt,t) +
Q^{\rm T} f(\vt,t).
\end{eqnarray*}
Realizing that ${\partial}{\boldsymbol g}/{\partial\vt}=-\ee\left(f(\vt,t)M(t)\vec{Z}(t)\right)$, we can rewrite this as \eqref{eq:cfpde}.
%\[
%\frac{\partial}{\partial t} g(\vt,t)  = 
%-\vt \diag\{\vec{\alpha}\}g(\vt,t)-\vt \diag\{\vec{\gamma}\}
% \frac{\partial}{\partial\vt}g(\vt,t)  
%+\half\vt^2\diag\{\vec{\sigma}^2\} g(\vt,t) +
%Q^{\rm T} f(\vt,t).
%\]
\end{proof}

%%%%%%%%%%%%%%%%%%%%%%%%%%%%%%%%%%%%%%%%

It is remarked that the above system \eqref{eq:cfpde} of {\sc pde}\,s coincides, for $t\to\infty$, with the stationary result of \cite{XING} (where it is mentioned that in \cite{XING} the feature of reflection at 0 has been incorporated). In addition, it is noted that this system can be converted into a system of {\it ordinary} differential equations, as follows. Let $T$ be exponentially distributed with mean $\tau^{-1},$ independent of all other random features involved in the model.
Define
\[g_i(\vt) := {\mathbb E} e^{-\vt M(T)} 1\{X(T)=i\}.\]
Now multiply the {\sc pde} featuring in Thm.\ \ref{PDEth} by $\tau e^{-\tau t}$, and integrate over $t\in[0,\infty)$, to obtain (use integration by parts for the left-hand side)
\[\lambda\left({\boldsymbol g}(\vt)- e^{-\vt m_0}{\boldsymbol p}_0\right) = 
Q^{\rm T} {\boldsymbol g}(\vt) -
\left(\vt\,{\rm diag}\{{\boldsymbol \alpha}\}-\frac{1}{2}\vt^2{\rm diag}\{{\boldsymbol \sigma}^2\}\right)
{\boldsymbol g}(\vt)
-\vt\,{\rm diag}\{{\boldsymbol \gamma}\}\,\frac{\partial}{\partial \vt}{\boldsymbol g}(\vt).\]

\vb

All above results related to the case $K=1.$
For higher values of $K$ the same procedure can be followed; as announced we now present the result for $K=2$. Let $i,k$ be elements of $\{1,\ldots,d\}$, and $\vec{\vt}\equiv(\vt_1,\vt_2)\in{\mathbb R}^2$. We obtain the following system of {\sc pde}\,s:
\begin{eqnarray*}\lefteqn{\frac{\partial}{\partial t}g_{i,k}(\vec{\vt},t)=\sum_{j=1}^d  q_{ji}\, g_{j,k}(\vec{\vt},t)
+\sum_{\ell=1}^d  q_{\ell k}\, g_{i,\ell}(\vec{\vt},t)}\\
&&-\,\left(\vt_1\alpha_i+\vt_2\alpha_k-\frac{1}{2}\vt_1^2\sigma_i^2-\frac{1}{2}\vt_2^2\sigma_k^2\right)g_{i,k}(\vec{\vt},t)
-\vt_1\gamma_i\,\frac{\partial}{\partial \vt_1}g_{i,k}(\vec{\vt},t)
-\vt_2\gamma_k\,\frac{\partial}{\partial \vt_2}g_{i,k}(\vec{\vt},t),
\end{eqnarray*}
or in self-evident matrix notation, suppressing the arguments $\vec{\vt}$ and $t$,
\begin{eqnarray*}\lefteqn{\frac{\partial G}{\partial t}= Q^{\rm T}G+GQ-\vt_1 \,{\rm diag}\{{\boldsymbol \alpha}\} G
-\vt_2 G\,{\rm diag}\{{\boldsymbol \alpha}\}}\\
&&+\,\frac{1}{2}\vt_1^2 \,{\rm diag}\{{\boldsymbol \sigma}^2\} G
+\frac{1}{2}\vt_2^2 G\,{\rm diag}\{{\boldsymbol \sigma}^2\}
-\vt_1 \,{\rm diag}\{{\boldsymbol \gamma}\} \frac{\partial G}{\partial \vt_1}
-\vt_2  \frac{\partial G}{\partial \vt_2}\,{\rm diag}\{{\boldsymbol \gamma}\}.
\end{eqnarray*}

This matrix-valued system of {\sc pde}\,s can be converted into its vector-valued counterpart. Define the $d^2$-dimensional vector 
$\check{\boldsymbol g}(\vec{\vt},t):={\rm vec}(G(\vec{\vt},t)).$ Recall the definitions of the Kronecker sum
(denoted by  `$\oplus$') and the Kronecker product (denoted by `$\otimes$'). Using the relations ${\rm vec}(ABC)=(C^{\rm T}\otimes A){\rm vec}(B)$ and $A\oplus B= A\otimes I+I\otimes B$, for matrices $A$, $B$, and $C$ of appropriate dimensions,
we obtain the vector-valued {\sc pde}
\begin{eqnarray*}\lefteqn{\hspace{5mm}\frac{\partial \check {\vec{g}}}{\partial t}=
(Q^{\rm T}\oplus Q^{\rm T})\check{\vec{g}}
-\vt_1 (I\otimes {\rm diag}\{\vec{\alpha}\})\check{\vec{g}}
-\vt_2 ( {\rm diag}\{\vec{\alpha}\}\otimes I)\check{\vec{g}}}
 \\
&&+\,\frac{\vt_1^2}{2}  (I\otimes {\rm diag}\{\vec{\sigma}^2\})\check{\vec{g}}
+\frac{\vt_2^2 }{2}( {\rm diag}\{\vec{\sigma}^2\}\otimes I)\check{\vec{g}}
-\vt_1 (I\otimes {\rm diag}\{\vec{\gamma}\})\frac{\partial \check {\vec{g}}}{\partial \vt_1}
-\vt_2 ( {\rm diag}\{\vec{\gamma}\}\otimes I)\frac{\partial \check {\vec{g}}}{\partial \vt_2},
\end{eqnarray*}
again suppressing the arguments $\vec{\vt}$ and $t$. 

It is clear how this procedure should be extended to $K\in\{3,4,\ldots\}$,
but, as mentioned above, we do not include this because of the cumbersome notation needed.

\iffalse
$\rhd$ {\it MM: here we should decide to maintain this approach, or to include an operator-based approach. My current feeling would be to include a last section with general results, in which also the operator-based approach can find its way...}\fi

%%%%%%%%%%%%%%%%%%%%%%%%%%%%%%%%%%%%%%%%%%%%%%%

%%%%%%%%%%%%%%%%%%%%%%%%%%%%%%%%%%%%%%%%%%%%%%%%%

\subsection{Explicit computations for two-dimensional case} We now present more explicit expressions relating to the case that $d=2$. Define $q:=q_1+q_2$. Suppose the system starts off at $(M(0),X(0)) = (m_0,2)$. Throughout this example we use the notation
\[g_i(\vt,t,j):= {\mathbb E}\left(e^{-\vt M(t)} 1\{X(t)=j\}\,|\,X(0)=i\right).\] 
The theory of this section yields the following system of partial differential equations:
\[\frac{\partial}{\partial t} g_{2}(t, \vt, 1)+\vt\gamma_1\,\frac{\partial}{\partial \vt} g_{2}(t, \vt, 1)=
\left(-q_{1}-\vt \alpha_{1}+\frac{1}{2}\vt^{2} \sigma_{1}^{2}\right) g_{2}(t, \vt, 1)+q_2 g_{2}(t, \vt, 2)
,\]
\[\frac{\partial}{\partial t} g_{2}(t, \vt, 2)+\vt\gamma_2\,\frac{\partial}{\partial \vt} g_{2}(t, \vt, 2)=
\left(-q_{2}-\vt \alpha_{2}+\frac{1}{2}\vt^{2} \sigma_{2}^{2}\right) g_{2}(t, \vt, 2)+q_1 g_{2}(t, \vt, 1)
,\]
with conditions (realizing that $\pi_i=q_i/q$)
\[
\left( \begin{array}{cc}
g_{2}(0, \vt, 1) \\  g_{2}(0, \vt, 2)\end{array} \right)
=\left( \begin{array}{cc}
0 \\  e^{-\vt x} \end{array} \right),
\:\:\:\:
\left( \begin{array}{cc}
g_{2}(t, 0, 1) \\  g_{2}(t, 0, 2)\end{array} \right)
=\left( \begin{array}{cc}
\pi_1-\pi_1e^{-qt}\\  \pi_2+\pi_1 e^{-qt}\end{array}\right),
\]
and $\vt \in {\mathbb R}$ and $t\in [0, \infty)$.

In the special case that $q_1=0$ (so that state 2 is transient, and state 1 is absorbing), the system of differential equations decouples; the second of the above two partial differential equations can be solved using the method of characteristics. Routine calculations lead to
\[
g_{2}(t, \vt, 2)=\exp \left(-\vt m_0 e^{-\gamma_{2} t}-q_{2}t-\frac{\alpha_{2}}{\gamma_{2}}(\vt-\vt e^{-\gamma_{2}t})+
\frac{\sigma^{2}_{2}}{4\gamma_{2}}(\vt^{2}-\vt^{2}e^{-2\gamma_{2}t})\right).
\]
Now the first equation of the two partial differential equations can be solved as well, with the distinguishing feature that now we have a non-homogeneous (rather than a homogeneous) single-dimensional partial differential equation. It can be verified that it is solved by 
\begin{eqnarray*}
g_{2}(t, \vt, 1)&=&
q_{2}\exp\left(-\frac{\alpha_{1}}{\gamma_{1}}\vt+ \frac{\sigma_{1}^{2}}{4\gamma_{1}}\vt^{2}\right)\times\\
&&\hspace{2cm}\int_{0}^{t}g_{2}(s, \vt e^{-\gamma_{1}(t-s)}, 2)
\exp\left(\frac{\alpha_{1}}{\gamma_{1}}\vt e^{-\gamma_{1}(t-s)}-\frac{\sigma_{1}^{2}}{4\gamma_{1}}\vt^{2}e^{-2\gamma_{1}(t-s)}\right)\DD s.
\end{eqnarray*}

\section{Parameter scaling}\label{Sec:TS}

So far we have characterized the distribution of $M(t)$ in terms of an algorithm to determine moments, and a {\sc pde} for the Fourier-Laplace transform. In other words, so far we have not presented any explicit results on the distribution of $M(t)$ itself. In this section we consider asymptotic regimes in which this {\rm is} possible; these regimes can be interpreted as parameter scalings.

More specifically, in this section we consider the following two scaled versions of the {\sc mmou} model.
\begin{itemize}
\item[$\rhd$] In the first we (linearly) speed up the background process (that is,
we replace  $Q\mapsto NQ$ or, equivalently, $X(t) \mapsto X(Nt)$). Our main result is that, as $N\to\infty$, the {\sc mmou} essentially experiences the time-averaged parameters, i.e., $\alpha_\infty:={\boldsymbol \pi}^{\rm T}{\boldsymbol\alpha}$, $\gamma_\infty:={\boldsymbol \pi}^{\rm T}{\boldsymbol\gamma}$ and $\sigma_\infty^2:={\boldsymbol \pi}^{\rm T}{\boldsymbol\sigma^2}$. As a consequence, it behaves as an {\sc ou} process with these parameters.
\item[$\rhd$] The second regime considered concerns a {\it simultaneous} scaling of the background process and the {\sc ou} processes. This is done as in Section \ref{SPECC}: 
$Q$ on the one hand, and  ${\boldsymbol\alpha}$ and ${\boldsymbol\sigma^2}$ on the other hand are scaled at {\it different} rates: we replace ${\boldsymbol\alpha}\mapsto N^h{\boldsymbol\alpha}$ and ${\boldsymbol\sigma^2}\mapsto N^h{\boldsymbol\sigma^2}$, but $Q\mapsto N Q$ for some $h\geqslant 0$). We obtain essentially two regimes, in line with the observations in Section \ref{SPECC}. 
\end{itemize}
As mentioned above, we are particularly interested in the limiting behavior in the regime that $N$ grows large. It is shown that the process $M(t)$, which we now denote as $M\hN(t)$ to stress the dependence on $N$, converges to the solution of a specific {\sc sde}. Importantly, we establish {\it weak convergence}, i.e., in the sense of convergence at the process level; our result can be seen as the counterpart of the result for Markov-modulated infinite-server queues in \cite{DAVE}. 

\vb

We consider sequences of {\sc mmou} processes, indexed by $\N$,
subject to the following scaling: \(Q \mapsto \N Q\); \(\balpha  \mapsto \N^h \balpha \); \(\bsigma  \mapsto \N^{h/2} \bsigma\),
where $h \geqslant 0$; note that by appropriately choosing $h$ we enter the two regimes described above as we let $N$ grow large  (see Corollaries \ref{C1} and \ref{C2}).
The definitions of $M(t)$, ${\m Z}(t)$ and ${\m K}(t)$ (the latter two having been defined in Section \ref{TB}) then take the following form
(where superscripts are being used to make the dependence on $\N$ and $h$ explicit):
%\begin{equation} 
%\dd M\tN(t) = \sum_{k=1}^d \tilde M_k\tN(t) Z_k\sN(t) \dd t 
%+ \N^{h/2} \sum_{k=1}^d \sigma_k Z_k\tN(t) \dd B(t),
%\end{equation}
\begin{equation} 
\dd M\tN(t) =  (\N^h \balpha -  \bgamma M\tN(t))^\rmt {\m Z}\sN(t)\, \dd t 
+ \N^{h/2}  \bsigma^\rmt {\m Z}\sN(t)\, \dd B(t),
\end{equation}
%where \(\tilde M\tN(t) := \), 
and
%\begin{equation} \dd K_j\sN(t) = \dd Z_j\sN(t) - \N \sum_{i=1}^d Z_i\sN(t) q_{ij}  \dd t. \label{eq:KsN} \end{equation}
\begin{equation} \dd {\m Z}\sN(t) =  \N  Q^\rmt {\m Z}\sN(t)\,  \dd t + \dd {\m K}\sN(t)  . \label{eq:KsN} \end{equation}
%It is useful to define the local martingales \(L_k\sN(t)\), recalling the definition of the deviation matrix $D$ from Section \ref{MOD}:
%\begin{align*} 
%\dd L_k\sN(t) 
%& := \N^{-1} \sum_{j=1}^d D_{jk} \dd K_j\sN(t)  \\
%&  = \N^{-1} \sum_{j=1}^d D_{jk} \dd Z_j\sN(t) - \sum_{i=1}^d\sum_{j=1}^d Z_i\sN(t) q_{ij} D_{jk} \dd t \\ 
%& = \N^{-1} \sum_{j=1}^d D_{jk} \dd Z_j\sN(t) + \left(Z_k\sN(t)-\pi_k\right)\dd t, 
%\end{align*}
%where we used in the last equality the fact that $QD=\Pi-I$ (as an immediate consequence of $QF=\Pi$; see Section \ref{MOD}), which we can alternatively write as
%$ - \sum_{j=1}^d q_{ij} D_{jk} = \ind{i=k} - \pi_k$, for any $i,k\in\{1,\ldots,d\}.$

We keep the initial condition $M\tN(0)$ at a fixed level $M(0)$. Let, with the definitions of  ${\alpha_\infty},
\gamma_\infty,$ and $\sigma^2_\infty$ given above, 
the `average path'  $\varrho(t)$ be defined by the {\sc ode}
\[ \dd \varrho(t) = (\alpha_\infty - \gamma_\infty \varrho(t)) \dd t,\,\, \varrho(0)=\one_{\{h=0\}}M(0), \]
such that we have
\[ \varrho(t) =  e^{-\gamma_\infty t}\varrho(0)+
\frac{\alpha_\infty}{\gamma_\infty} (1 - e^{-\gamma_\infty t}). \]
It is possible to show that $\varrho(t)$ coincides with $\lim_{N\to\infty}N^{-h}\ee M^{[N,h]}(t)$, in particular we have the initial value $\varrho(0)=\lim_{N\to\infty}\ee N^{-h}M(0)=\one_{\{h=0\}}M(0)$.

We can now state the main theorem of this section.

\begin{theorem}\label{THS} 
Under the scaling \(Q \mapsto \N Q\); \(\balpha  \mapsto \N^h \balpha \); \(\bsigma  \mapsto \N^{h/2} \bsigma \),
%and assuming that \(\N^{-h}M\tN(0)\) converges in probability to the deterministic value $\varrho_0$,
we have that the scaled and centered process \(\hat M\tN(t)\), as defined through
\[ \hat M\tN(t) := \N^{-\beta} (M\tN(t) - \N^h\varrho(t)), \]
converges weakly to the solution of the following {\sc sde}:
\[ \dd \hat M(t) = - \gamma_\infty \hat M(t) \dd t + \sqrt{ \sigma^2_\infty \one_{\{h\leqslant1 \}}+ {V}'(t) \one_{\{h\geqslant 1 \}}}\dd B(t),\,\:\:\:\:\hat M(0)=0.  \]
where \(\beta:=\max\{h/2,h-1/2\}\), $B$ a Brownian motion, and
%\[ V(t) := 2\int_0^t \sum_{i=1}^d\sum_{j=1}^d\pi_i (\alpha_i - \gamma_i \varrho(s))D_{ij}(\alpha_j - \gamma_j \varrho(s)) \dd s. \]
%\[
%V(t) := \int_0^t  (\alpha - \gamma\varrho(s))^\rmt (\diag\{\bpi\} D+D^\rmt \diag\{\bpi\})(\alpha - \gamma \varrho(s)) \dd s.
%\]
\begin{equation}
\label{defv}
{V}(t) := \int_0^t (\balpha - \bgamma\varrho(s))^\rmt (\diag\{\bpi\} D+D^\rmt \diag\{\bpi\})(\balpha - \bgamma \varrho(s)){\rm d}s.
\end{equation}
\end{theorem}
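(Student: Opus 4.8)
The plan is to establish the functional limit by a corrector (Poisson-equation) argument combined with the martingale functional central limit theorem; the Laplace-transform system of Section~\ref{FLT} could instead be pushed through an $\N^{-1/2}$-expansion to give one-dimensional normality via Lévy's theorem, but the corrector route delivers the process-level statement directly and makes the deviation matrix surface through exactly the identity proved in Proposition~\ref{prop:nnd}. I would first write the semimartingale decomposition of $\hat M\tN$: substituting $M\tN(t)=\N^h\varrho(t)+\N^{\beta}\hat M\tN(t)$ into the scaled dynamics, and using $\dd\varrho(t)=(\alpha_\infty-\gamma_\infty\varrho(t))\,\dd t$ together with $\alpha_\infty-\gamma_\infty\varrho(t)=(\balpha-\bgamma\varrho(t))^\rmt\bpi$, the drift of $\hat M\tN$ splits into a fast-fluctuation term $\N^{h-\beta}(\balpha-\bgamma\varrho(t))^\rmt(\bz\sN(t)-\bpi)$ and a mean-reversion term $-\hat M\tN(t)\,\bgamma^\rmt\bz\sN(t)$, while the martingale term is $\N^{h/2-\beta}\bsigma^\rmt\bz\sN(t)\,\dd B(t)$. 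With $\beta=\max\{h/2,h-1/2\}$ one has $h-\beta\in\{h/2,\tfrac12\}$ and $h/2-\beta\in\{0,\tfrac{1-h}{2}\}$, which already encodes the dichotomy: the Brownian term survives only for $h\leqslant1$, while the fast-fluctuation drift carries the critical factor $\N^{1/2}$ only for $h\geqslant1$.

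The core step is to absorb the (formally diverging) fast-fluctuation drift by a corrector. Writing $\phi(s):=\balpha-\bgamma\varrho(s)$, I would solve the Poisson equation $Q\vec{w}(s)=-(\phi(s)-(\bpi^\rmt\phi(s))\one)$; the right-hand side is $\bpi$-centred, so a solution exists, and explicitly $\vec{w}(s)=D\phi(s)$ by $QD=\Pi-I$. Applying Itô to $\N^{h-\beta-1}\vec{w}(t)^\rmt\bz\sN(t)$ and invoking \eqref{eq:KsN}, the resulting term $\N^{h-\beta}(Q\vec{w}(s))^\rmt\bz\sN(s)$ cancels the fast-fluctuation drift exactly, at the price of a new martingale $\N^{h-\beta-1}\int_0^t\vec{w}(s)^\rmt\,\dd\m K\sN(s)$. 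I would therefore work with the corrected process $\tilde M\tN(t):=\hat M\tN(t)+\N^{h-\beta-1}\vec{w}(t)^\rmt\bz\sN(t)$; since $h-\beta-1<0$ for every $h\geqslant0$ and $\vec{w}(t)^\rmt\bz\sN(t)$ is bounded, the correction is uniformly $o(1)$, so $\tilde M\tN$ and $\hat M\tN$ share the same weak limit.

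It then remains to identify the limit of $\tilde M\tN$. Its drift is $-\hat M\tN(s)\,\bgamma^\rmt\bz\sN(s)\,\dd s$ plus an $o(1)$ corrector-derivative term; replacing $\bgamma^\rmt\bz\sN(s)$ by $\gamma_\infty$ inside the time integral by fast averaging produces the limiting drift $-\gamma_\infty\hat M(s)\,\dd s$. For the martingale part I would track the two predictable quadratic variations separately: the Brownian piece contributes $\N^{h-2\beta}\int_0^t\bsigma^\rmt\diag\{\bz\sN(s)\}\bsigma\,\dd s$, which for $h\leqslant1$ averages to $\sigma_\infty^2 t$ and otherwise vanishes; the corrector martingale contributes $\N^{2h-2\beta-1}\int_0^t\vec{w}(s)^\rmt P_s^{[N]}\vec{w}(s)\,\dd s$, where $\dd\langle\m K\sN\rangle_s=\N P_s^{[N]}\,\dd s$ and, by the computation in the proof of Proposition~\ref{prop:nnd}, the time-average of $P_s^{[N]}$ tends to $-(Q^\rmt\diag\{\bpi\}+\diag\{\bpi\}Q)$. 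For $h\geqslant1$ the exponent $2h-2\beta-1$ vanishes, and using $\vec{w}=D\phi$ with the identity $-D^\rmt(Q^\rmt\diag\{\bpi\}+\diag\{\bpi\}Q)D=\diag\{\bpi\}D+D^\rmt\diag\{\bpi\}$ from that same proof, this quadratic variation converges to $\int_0^t\phi(s)^\rmt(\diag\{\bpi\}D+D^\rmt\diag\{\bpi\})\phi(s)\,\dd s=V(t)$ of \eqref{defv}; for $h<1$ it vanishes. Since the jumps of the corrector martingale are of order $\N^{h-\beta-1}\to0$, the martingale functional central limit theorem yields weak convergence of the martingale part to $\int_0^t\sqrt{\sigma_\infty^2\one_{\{h\leqslant1\}}+V'(s)\one_{\{h\geqslant1\}}}\,\dd B(s)$. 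As the target SDE is linear, its solution is a continuous functional of the driving martingale and the drift, so a Gronwall/continuous-mapping argument upgrades this to $\hat M\tN\Rightarrow\hat M$, with $\hat M(0)=0$ following from $\hat M\tN(0)=\N^{-\beta}(M(0)-\N^h\varrho(0))\to0$.

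The main obstacles are the ergodic-averaging statements and the attendant tightness. The averaging of $\int_0^t\bsigma^\rmt\diag\{\bz\sN(s)\}\bsigma\,\dd s$ and of $\int_0^t\vec{w}(s)^\rmt P_s^{[N]}\vec{w}(s)\,\dd s$ towards their $\bpi$-weighted limits is standard for the $\N$-accelerated chain, but the mean-reversion average $\int_0^t\hat M\tN(s)(\bgamma^\rmt\bz\sN(s)-\gamma_\infty)\,\dd s\to0$ is the delicate point, since $\hat M\tN(s)$ is itself random and coupled to the background process. I would handle it by first deriving an a priori bound on $\ee\sup_{s\leqslant t}|\hat M\tN(s)|^2$ from the decomposition and Gronwall's inequality, and then either insert a second corrector for $\bgamma$ or control the cross term directly; with such moment bounds in hand, tightness of $\{\hat M\tN\}$ in the Skorokhod space follows from an Aldous-type criterion, completing the passage from convergence of the drift and quadratic variations to process-level weak convergence.
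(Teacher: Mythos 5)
Your proposal is correct in outline and reaches the theorem by a genuinely different, though closely related, route. The paper removes the mean-reversion drift \emph{exactly} by the integrating factor $e^{\bgamma^\rmt\bzetan(t)}$ (passing to $Y\tN(t)=e^{\bgamma^\rmt\bzetan(t)}\hat M\tN(t)$), rewrites the fast drift as $-\sqrt{N}\int_0^\cdot(\balpha-\bgamma\varrho(s))^\rmt(QD)^\rmt\bz\sN(s)\,\dd s$ via $QD=\Pi-I$, and then invokes the FCLT for $\vec{K}\sN/\sqrt{N}$ together with a convergence-of-stochastic-integrals lemma (Lemma~5.4, i.e.\ Jacod--Shiryaev VI.6.22) to identify the limit; no a priori moment bounds on $\hat M\tN$ are needed there. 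You instead absorb the fast drift by the Poisson-equation corrector $\N^{h-\beta-1}(D\phi(t))^\rmt\bz\sN(t)$ --- algebraically the same use of $QD=\Pi-I$ --- and identify the noise by the martingale FCLT applied to the corrector martingale and the Brownian term; the quadratic-variation computation and the identity $-D^\rmt(Q^\rmt\diag\{\bpi\}+\diag\{\bpi\}Q)D=\diag\{\bpi\}D+D^\rmt\diag\{\bpi\}$ from Proposition~\ref{prop:nnd} appear identically in both arguments, and your exponent bookkeeping ($2h-2\beta-1=\min\{h-1,0\}$, $h-\beta-1<0$) is right. The price of your route is exactly the point you flag: keeping the term $-\hat M\tN(s)\,\bgamma^\rmt\bz\sN(s)\,\dd s$ forces you to average a random integrand coupled to the chain, so you must actually supply the bound on $\ee\sup_{s\leq t}|\hat M\tN(s)|^2$, the second corrector (or direct control) for $\int_0^t\hat M\tN(s)\,\bgamma^\rmt(\bz\sN(s)-\bpi)\,\dd s$, and Aldous-type tightness before the continuous-mapping step for the linear {\sc sde}; none of this arises in the paper's version, where the only averaging needed is of the factor $\exp(\bgamma^\rmt\bzetan(s))\to e^{\gamma_\infty s}$, uniformly on compacts and almost surely. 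Those estimates are standard but not free, so your argument should be regarded as complete only modulo that block of work; what it buys in exchange is that it avoids the stochastic-integral convergence lemma and stays entirely within classical averaging plus the martingale FCLT.
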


Before proving this result, we observe that the above theorem provides us with the limiting behavior in the two regimes described at the beginning of this section. In the first corollary we simply take $h=0.$

\begin{corollary}\label{C1}
Under the scaling \(Q \mapsto \N Q\), with \(\balpha\) and \(\bsigma\) kept at their original values, we have that $M^{[N,0]}(t)$ converges 
weakly to a process ${\mathscr M}_1(t)$, which is an (ordinary, i.e., non-modulated)  {\sc ou} process with parameters \((\alpha_\infty, \gamma_\infty, \sigma_\infty)\), defined through the {\sc sde}
\[ \dd {\mathscr M}_1(t) = (\alpha_\infty - \gamma_\infty {\mathscr M}_1 (t))\dd t + \sigma_\infty \dd B(t). \]
\end{corollary}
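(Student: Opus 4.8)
The plan is to derive Corollary~\ref{C1} as the special case $h=0$ of Theorem~\ref{THS}. First I would simply substitute $h=0$ into the statement of the theorem and track how each ingredient simplifies. With $h=0$ the normalization exponent becomes $\beta=\max\{0,-1/2\}=0$, so the centered process $\hat M\tN(t)=M\tN(t)-\varrho(t)$ involves no rescaling by a power of $N$. The scaling prescription itself reduces to $Q\mapsto NQ$ with $\balpha$ and $\bsigma$ left at their original values, which is precisely the regime stated in the corollary. The indicator $\one_{\{h\leqslant 1\}}$ equals $1$ while $\one_{\{h\geqslant 1\}}$ equals $0$, so the diffusion coefficient in the limiting \textsc{sde} collapses from $\sqrt{\sigma^2_\infty\,\one_{\{h\leqslant1\}}+V'(t)\,\one_{\{h\geqslant1\}}}$ to the constant $\sigma_\infty$.

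Next I would recover the \textsc{ou} process with the claimed parameters by undoing the centering. Theorem~\ref{THS} gives that $\hat M(t)$ solves $\dd\hat M(t)=-\gamma_\infty\hat M(t)\,\dd t+\sigma_\infty\,\dd B(t)$ with $\hat M(0)=0$. Since for $h=0$ we have $\hat M\tN(t)=M^{[N,0]}(t)-\varrho(t)$ and $\varrho(0)=\one_{\{h=0\}}M(0)=M(0)$ matches the fixed initial condition, the natural candidate for the limit is ${\mathscr M}_1(t):=\hat M(t)+\varrho(t)$. The key step is then to verify that this shifted process satisfies the advertised \textsc{sde}: since $\varrho$ is deterministic and satisfies $\dd\varrho(t)=(\alpha_\infty-\gamma_\infty\varrho(t))\,\dd t$, adding the two equations gives
\[
\dd{\mathscr M}_1(t)=\dd\hat M(t)+\dd\varrho(t)=\bigl(-\gamma_\infty\hat M(t)+\alpha_\infty-\gamma_\infty\varrho(t)\bigr)\dd t+\sigma_\infty\,\dd B(t),
\]
and recognizing $-\gamma_\infty\hat M(t)-\gamma_\infty\varrho(t)=-\gamma_\infty{\mathscr M}_1(t)$ yields exactly $\dd{\mathscr M}_1(t)=(\alpha_\infty-\gamma_\infty{\mathscr M}_1(t))\,\dd t+\sigma_\infty\,\dd B(t)$, with ${\mathscr M}_1(0)=\varrho(0)=M(0)$.

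Finally I would transfer the weak convergence. Since the centering $\varrho(t)$ is a fixed deterministic continuous function, the map $x(\cdot)\mapsto x(\cdot)+\varrho(\cdot)$ is continuous on path space, so $M^{[N,0]}(t)=\hat M^{[N,0]}(t)+\varrho(t)$ converges weakly to $\hat M(t)+\varrho(t)={\mathscr M}_1(t)$ by the continuous mapping theorem, invoking the weak convergence of $\hat M^{[N,0]}$ supplied by Theorem~\ref{THS}. I expect no genuine obstacle here: the entire content of the corollary is already contained in the theorem, and the only things to check are the bookkeeping of the indicators and $\beta$ at $h=0$, the identification of $\sigma_\infty$ as the constant diffusion coefficient, and the elementary addition of the deterministic drift $\varrho$ back into the centered limit. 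The one point deserving a line of care is confirming that at $h=0$ the process is genuinely \emph{not} rescaled (i.e.\ $\beta=0$) so that the uncentered limit is a bona fide \textsc{ou} process rather than a degenerate one.
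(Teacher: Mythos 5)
Your argument is correct and coincides with the paper's treatment: the authors simply declare Corollary~\ref{C1} a trivial consequence of Theorem~\ref{THS}, and your proposal spells out exactly that specialization ($h=0$ gives $\beta=0$, the indicator selects $\sigma_\infty$, and the deterministic centering $\varrho$ is added back via the continuous mapping theorem). All the bookkeeping you perform is accurate, so nothing further is needed.
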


The second corollary describes the situation in which both the background process and the {\sc ou} process are scaled, but at  different rates. We explicitly characterize the limiting behaviour in each of the three resulting regimes.

\begin{corollary}\label{C2}
Under the scaling \(Q \mapsto \N Q\); \(\balpha  \mapsto \N^h \balpha \); \(\bsigma  \mapsto \N^{h/2} \bsigma \), we have that $\hat M^{[N,h]}(t)$ converges 
weakly to a process ${\mathscr M}_2(t)$, defined through one of the following {\sc sde}\,s: if \,$0<h<1$, then
\[\dd  {\mathscr M}_2(t) = - \gamma_\infty  {\mathscr M}_2(t)\, \dd t +  \sigma_\infty \dd B(t),\] if $h=1$, then
\[\dd  {\mathscr M}_2(t) = - \gamma_\infty  {\mathscr M}_2(t)\, \dd t + \sqrt{\sigma^2_\infty +{V}'(t)}\dd B(t), \]
and if $h>1$, then
\[\dd {\mathscr M}_2(t) = - \gamma_\infty  {\mathscr M}_2 (t)\,\dd t +\sqrt{{V}'(t)} \, \dd B(t).\]
\end{corollary}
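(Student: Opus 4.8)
The plan is to work directly with the semimartingale $\hat M\tN$ and to obtain process-level convergence via a functional martingale central limit theorem, so that the deviation matrix $D$ enters exactly as the solution of a Poisson equation (a ``corrector'') for the accelerated background chain. Writing $M\tN(t)=N^h\varrho(t)+N^\beta\hat M\tN(t)$ and substituting into the scaled dynamics, a direct computation gives
\begin{equation*}
\dd\hat M\tN(t)=\Big(N^{h-\beta}\,\vec r(t)^\rmt\vec Z\sN(t)-\bgamma^\rmt\vec Z\sN(t)\,\hat M\tN(t)\Big)\dd t+N^{h/2-\beta}\,\bsigma^\rmt\vec Z\sN(t)\,\dd B(t),
\end{equation*}
where $\vec r(t):=(\balpha-\alpha_\infty\one)-\varrho(t)(\bgamma-\gamma_\infty\one)$ is a centered fluctuation, i.e.\ $\bpi^\rmt\vec r(t)=0$. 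The three pieces are: a centered drift whose prefactor $N^{h-\beta}$ (equal to $N^{h/2}$ for $h<1$ and $N^{1/2}$ for $h\geqslant 1$) blows up; a reversion drift $-\bgamma^\rmt\vec Z\sN(t)\,\hat M\tN(t)=-\gamma_{X(t)}\hat M\tN(t)$; and a Brownian martingale with prefactor $N^{h/2-\beta}$. First I would record a uniform-in-$N$ bound $\sup_N\ee\sup_{s\leqslant t}|\hat M\tN(s)|^2<\infty$, which feeds tightness and controls the error terms below.

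The key step is to absorb the exploding centered drift into a martingale. Since $\vec r(t)$ is centered and $QD=\Pi-I$, the corrector $\vec\chi(t):=D\vec r(t)$ solves $Q\vec\chi(t)=-\vec r(t)$. Applying the product rule to $N^{h-\beta-1}\vec\chi(t)^\rmt\vec Z\sN(t)$ and using \eqref{eq:KsN} yields
\begin{equation*}
\int_0^t N^{h-\beta}\vec r(s)^\rmt\vec Z\sN(s)\,\dd s=\mathcal A_N(t)-\Big[N^{h-\beta-1}\vec\chi(s)^\rmt\vec Z\sN(s)\Big]_0^t+N^{h-\beta-1}\!\int_0^t\vec\chi'(s)^\rmt\vec Z\sN(s)\,\dd s,
\end{equation*}
with the genuine martingale $\mathcal A_N(t):=N^{h-\beta-1}\int_0^t\vec\chi(s)^\rmt\dd\vec K\sN(s)$. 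Because $h-\beta-1=\min\{h/2,1/2\}-1<0$ and $\vec\chi(s)^\rmt\vec Z\sN(s)$ is bounded, the boundary and $\vec\chi'$ terms vanish uniformly on compacts. An analogous, simpler corrector applied to the centered vector $\bgamma-\gamma_\infty\one$ shows $\int_0^t(\gamma_{X(s)}-\gamma_\infty)\hat M\tN(s)\,\dd s\to 0$ (here the prefactor is $N^{-1}$, so no CLT-scale survives and the a priori bound kills the covariation and boundary contributions), so the reversion drift collapses to $-\gamma_\infty\int_0^t\hat M\tN(s)\,\dd s$.

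It then remains to identify the limiting quadratic variation of $\mathcal M_N:=\mathcal A_N+\mathcal B_N$, where $\mathcal B_N(t):=N^{h/2-\beta}\int_0^t\bsigma^\rmt\vec Z\sN(s)\,\dd B(s)$. Reading off $\dd\langle\vec K\sN\rangle_t$ from the accelerated version of the identity in the proof of Proposition~\ref{prop:nnd} (whose stationary rate is $-N(Q^\rmt\diag\{\bpi\}+\diag\{\bpi\}Q)$) and using the ergodic theorem for the fast chain to average the fluctuating integrands, one gets
\begin{equation*}
\langle\mathcal A_N\rangle_t\to N^{2(h-\beta)-1}\!\int_0^t(D\vec r(s))^\rmt\big[-(Q^\rmt\diag\{\bpi\}+\diag\{\bpi\}Q)\big]D\vec r(s)\,\dd s,
\end{equation*}
whose prefactor equals $N^{h-1}\to0$ for $h<1$ and $1$ for $h\geqslant1$; the matrix identity $-D^\rmt(Q^\rmt\diag\{\bpi\}+\diag\{\bpi\}Q)D=D^\rmt\diag\{\bpi\}+\diag\{\bpi\}D$ established inside Proposition~\ref{prop:nnd}, together with $D\one=0=\bpi^\rmt D$ (which lets me replace $\vec r$ by $\balpha-\bgamma\varrho$), turns this into $\one_{\{h\geqslant1\}}V(t)$ with $V$ as in \eqref{defv}. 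Likewise $\langle\mathcal B_N\rangle_t\to\one_{\{h\leqslant1\}}\sigma_\infty^2\, t$, while $\langle\mathcal A_N,\mathcal B_N\rangle\equiv0$ because $B$ and $X$ are independent, so $\vec K\sN$ and $B$ are orthogonal. As the jumps of $\mathcal M_N$ are $O(N^{h-\beta-1})\to0$, the martingale functional CLT gives $\mathcal M_N\Rightarrow\int_0^\cdot\sqrt{\sigma_\infty^2\one_{\{h\leqslant1\}}+V'(s)\one_{\{h\geqslant1\}}}\,\dd W(s)$.

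Putting the pieces together, $\hat M\tN(t)=\hat M\tN(0)-\gamma_\infty\int_0^t\hat M\tN(s)\,\dd s+\mathcal M_N(t)+o_{\mathbb P}(1)$ with $\hat M\tN(0)\to0$ (using $\varrho(0)=\one_{\{h=0\}}M(0)$). Since the solution map of the linear equation $\hat M(t)=-\gamma_\infty\int_0^t\hat M(s)\,\dd s+\mathcal M(t)$ is continuous in $\mathcal M$ (explicitly $\hat M(t)=\int_0^t e^{-\gamma_\infty(t-s)}\,\dd\mathcal M(s)$) and has a unique solution, the continuous-mapping theorem---after the tightness furnished by the a priori bound and the Rebolledo/Aldous criterion---yields weak convergence of $\hat M\tN$ to the unique solution of the stated SDE. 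The main obstacle is the martingale-conversion step: verifying that the exploding centered drift is genuinely negligible apart from $\mathcal A_N$, and that the ergodic averaging of the random quadratic-variation rate produces exactly the deviation-matrix quadratic form---this is where the $h$-dichotomy and the algebra of Proposition~\ref{prop:nnd} are used in an essential way; a secondary, more routine obstacle is the uniform moment bound and tightness. An alternative route, closer to the Fourier--Laplace machinery of Section~\ref{TBPDE}, would perform a singular-perturbation expansion of the transform PDE for $\hat M\tN$ in powers of $N$---with $D$ appearing as the first corrector---to obtain the finite-dimensional distributions via the $K$-point system and L\'evy's continuity theorem, and then add tightness.
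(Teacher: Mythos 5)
Your proposal is correct. Since the paper disposes of Corollary~\ref{C2} in one line as a specialization of Theorem~\ref{THS}, the real comparison is with the proof of that theorem, which you have effectively re-derived by a related but not identical route. You share the same skeleton: the centered drift $N^{h-\beta}(\balpha-\varrho(t)\bgamma)^{\rm T}(\vec{Z}^{[N]}(t)-\bpi)$, the exponent identities $h-\beta-\tfrac12=\tfrac12\min\{h-1,0\}$ and $\tfrac{h}{2}-\beta=\tfrac12\min\{1-h,0\}$ that produce the trichotomy in $h$, the quadratic-variation algebra from Prop.~\ref{prop:nnd} that turns $-D^{\rm T}(Q^{\rm T}\diag\{\bpi\}+\diag\{\bpi\}Q)D$ into $\diag\{\bpi\}D+D^{\rm T}\diag\{\bpi\}$, and independence of $X$ and $B$ to justify adding the two limits when $h=1$. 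The mechanics differ in two places. First, you absorb the exploding drift into a martingale via the Poisson-equation corrector $\vec{\chi}=D\vec{r}$ and the product rule, then apply the martingale FCLT to $\mathcal{A}_N+\mathcal{B}_N$; the paper instead writes $\vec{Z}^{[N]}-\bpi=-(QD)^{\rm T}\vec{Z}^{[N]}$, views the drift as a stochastic integral against the converging semimartingale $\sqrt{N}\int_0^\cdot Q^{\rm T}\vec{Z}^{[N]}(s)\,{\rm d}s$, and invokes Lemma~\ref{lemmah}. These are two presentations of the same limiting Gaussian martingale with bracket $V(t)$. Second, and more substantively, the paper eliminates the state-dependent reversion drift \emph{exactly} with the random integrating factor $e^{\bgamma^{\rm T}\bzetan(t)}$ and transforms back at the end, whereas you replace $\gamma_{X(t)}$ by $\gamma_\infty$ only asymptotically and close the argument with the continuous-mapping theorem applied to the solution map of the limiting linear SDE. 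Your route is the more standard averaging/homogenization scheme and is pleasingly modular, but it genuinely requires the a priori bound $\sup_N\ee\sup_{s\leqslant t}|\hat M^{[N,h]}(s)|^2<\infty$ to kill the $\gamma$-replacement error; this is provable (run the corrector decomposition first, bound $\ee\sup_s|\mathcal{A}_N(s)|^2$ by Doob and the bracket computation, then apply Gronwall), but it is not free since the drift prefactor $N^{h-\beta}$ diverges — the integrating factor is precisely what buys the paper exemption from that estimate.
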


These corollaries are trivial consequences of Thm.\ \ref{THS}, and therefore  we direct our attention to the proof of this main theorem itself.
We remark that Corollary \ref{C2} confirms an observation we made in Section~\ref{TB}: for $h<1$ the system essentially behaves as an non-modulated {\sc ou} process, while for $h>1$ 
the background process plays a role through its deviation matrix $D$. 
\medskip\\
In the proof of Theorem~\ref{THS} we need an auxiliary result, which we present first. 
%
%It uses the $d$-dimensional process  $\vec{Z}^{[N,Q]}$, defined by
%\[\vec{Z}^{[N,Q]}:=\sqrt{N}\int_0^\cdot Q^{\rm T}\vec{Z}^{[N]}(s)\,{\rm d}s.\]

\begin{lemma}\label{lemmah}
Let the $d$-dimensional row vectors ${\boldsymbol \Psi}^{[N]}$ be a sequence of predictable processes such that ${\boldsymbol \Psi}^{[N]}(t)\to{\boldsymbol \Psi}(t)$ in probability uniformly on compact sets, i.e., as $N\to\infty$,
\[\sup_{t\leq T}|{\boldsymbol \Psi}^{[N]}(t)-{\boldsymbol \Psi}(t)|\to 0\] in probability for every $T>0$;  here ${\boldsymbol \Psi}$ is deterministic, satisfying $\int_0^t {\boldsymbol \Psi}(s){\boldsymbol \Psi}(s)^\rmt\,\dd s<\infty$ for every $t>0$. 
Furthermore, let $\vec{X}^{[N]}$ be continuous semimartingales that converge weakly to a $d$-dimensional scaled Brownian motion $\boldsymbol B$ with quadratic variation $\langle{\boldsymbol B}\rangle_t=Ct$ (where $C\in\rr^{d\times d}$).
%Suppose that $\vec{Z}^{[N,Q]}$ converges weakly to a $d$-dimensional Brownian motion $\tilde {\boldsymbol B}$.
%
Then, as $N\to\infty$,  the stochastic integrals \[\int_0^\cdot {\boldsymbol \Psi}^{[N]}(s)\,\dd \vec{X}^{[N]}(s)\]  converge  weakly to the time-inhomogeneous Brownian motion $B^{\boldsymbol \Psi}:=\int_0^\cdot {\boldsymbol \Psi}(s)\,\dd \boldsymbol B(s)$ with quadratic variation
\[
\langle B^{\boldsymbol \Psi}\rangle_t=\int_0^t{\boldsymbol \Psi}(s)C{\boldsymbol \Psi}(s)^\rmt\,\dd s.
\]

\end{lemma}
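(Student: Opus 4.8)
The plan is to establish this as a stochastic-calculus stability (``continuous mapping for stochastic integrals'') result, for which the natural tool is the theory of weak convergence of stochastic integrals due to Kurtz and Protter. First I would verify that the sequence of integrators $\vec{X}^{[N]}$ satisfies the \emph{uniform tightness} (UT) condition: since each $\vec{X}^{[N]}$ is a continuous semimartingale converging weakly to a scaled Brownian motion $\boldsymbol B$ with $\langle\boldsymbol B\rangle_t=Ct$, one controls the predictable finite-variation parts and the quadratic-variation parts uniformly in $N$ on compacts. In the application we have in mind (see \eqref{eq:KsN}), the integrators are martingales with absolutely continuous, uniformly bounded quadratic variations, so UT holds automatically; I would state this as the quantitative hypothesis that makes the Kurtz--Protter machinery applicable.

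The second step is to handle the fact that the \emph{integrands} $\boldsymbol\Psi^{[N]}$ are themselves random and only converge in probability (uniformly on compacts) to the deterministic limit $\boldsymbol\Psi$. The clean way is to pass to the joint pair $(\boldsymbol\Psi^{[N]},\vec{X}^{[N]})$ and argue joint weak convergence to $(\boldsymbol\Psi,\boldsymbol B)$. Because $\boldsymbol\Psi$ is deterministic, convergence in probability of $\boldsymbol\Psi^{[N]}$ to it combines with the weak convergence of $\vec{X}^{[N]}$ to give joint convergence of the pair (a standard Slutsky-type argument: a sequence converging in probability to a constant is asymptotically independent of, and jointly convergent with, any weakly convergent sequence). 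The hypothesis $\int_0^t\boldsymbol\Psi(s)\boldsymbol\Psi(s)^\rmt\,\dd s<\infty$ guarantees that the limiting integrand is a legitimate, square-integrable integrand against $\boldsymbol B$, so the candidate limit $B^{\boldsymbol\Psi}=\int_0^\cdot\boldsymbol\Psi(s)\,\dd\boldsymbol B(s)$ is well defined.

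With UT for the integrators and joint convergence of the integrand-integrator pairs in hand, the Kurtz--Protter stability theorem yields joint weak convergence
\[
\left(\boldsymbol\Psi^{[N]},\,\vec{X}^{[N]},\,\int_0^\cdot\boldsymbol\Psi^{[N]}(s)\,\dd\vec{X}^{[N]}(s)\right)\Longrightarrow\left(\boldsymbol\Psi,\,\boldsymbol B,\,\int_0^\cdot\boldsymbol\Psi(s)\,\dd\boldsymbol B(s)\right)
\]
in the Skorokhod topology, which in particular gives the asserted weak convergence of the stochastic integrals. Finally I would identify the law of the limit: since $\boldsymbol\Psi$ is deterministic, $B^{\boldsymbol\Psi}$ is a continuous Gaussian martingale, hence (by L\'evy's characterization applied after a deterministic time change) a time-inhomogeneous Brownian motion, and its quadratic variation is computed directly from the bilinear form of the quadratic variation of $\boldsymbol B$, namely
\[
\langle B^{\boldsymbol\Psi}\rangle_t=\int_0^t\boldsymbol\Psi(s)\,\dd\langle\boldsymbol B\rangle_s\,\boldsymbol\Psi(s)^\rmt=\int_0^t\boldsymbol\Psi(s)\,C\,\boldsymbol\Psi(s)^\rmt\,\dd s.
\]

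The main obstacle I anticipate is not the identification of the limit (which is routine Gaussian bookkeeping once convergence is known) but establishing the hypotheses of the Kurtz--Protter theorem rigorously, specifically verifying uniform tightness of the integrators $\vec{X}^{[N]}$ and, more delicately, promoting the separate convergences (in probability for $\boldsymbol\Psi^{[N]}$, weakly for $\vec{X}^{[N]}$) to \emph{joint} weak convergence of the coupled pair. The deterministic nature of the limiting integrand $\boldsymbol\Psi$ is precisely what rescues this joint-convergence step, and I would make sure to invoke it explicitly rather than assuming any independence between $\boldsymbol\Psi^{[N]}$ and $\vec{X}^{[N]}$, which need not hold.
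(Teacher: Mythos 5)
Your proposal matches the paper's own proof, which likewise disposes of the lemma by invoking the stability theorem for weak convergence of stochastic integrals (the paper cites Jacod--Shiryaev, Thm.\ VI.6.22, the exact counterpart of the Kurtz--Protter result you use) and obtains the required joint weak convergence of the pair $({\boldsymbol \Psi}^{[N]},\vec{X}^{[N]})$ from the uniform convergence in probability of the integrands to a deterministic limit. If anything you are more explicit than the paper, which does not even mention the uniform-tightness (P-UT) condition on the integrators that both cited theorems require and which you rightly flag as the delicate hypothesis to verify in the applications.
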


The claim of Lemma \ref{lemmah} essentially
follows from \cite[Thm. VI.6.22]{JS}. To check the condition of the cited theorem, one needs weak convergence of the pair $({\boldsymbol \Psi}^{[N]}, \vec{X}^{[N]})$, but this is guaranteed by the uniform convergence in probability of the ${\boldsymbol \Psi}^{[N]}(t)$.
\vb

We now proceed with the proof of Thm.\ \ref{THS}.

\begin{proof} 

%Starting point is Equation~\eqref{eq:zz}. When we upscale $Q$ to $NQ$, we write instead  
%\begin{equation}\label{eq:zzn}
%{\rm d}\vec{Z}^{[N]}(t) = NQ^{\rm T}\vec{Z}^{[N]}(t)\,{\rm d}t +{\rm d} \vec{K}^{[N]}(t).
%\end{equation} 
The proof of Thm.\ \ref{THS} consists of 4 steps.

$\rhd$ {\it Step} 1. 
We describe the dynamics of the process $\hat M\tN(t)$ through
\begin{align*}
\dd\hat M\tN(t) & =N^{h-\beta}(\balpha-\rho(t)\bgamma)^\rmt(\vec{Z}^{[N]}(t)-\bpi)\,\dd t+N^{h/2-\beta}\vec{\bsigma}^\rmt\bz^{[N]}_t\dd B(t) - \bgamma^\rmt \bz^{[N]}_t\hat M\tN(t)\,\dd t \\
& =: N^{h-\beta-\half}\dd G^{[N]}(t)+N^{h/2-\beta}\dd\hat{B}^{[N]}(t)- \bgamma^\rmt \bz^{[N]}_t\hat M\tN(t)\,\dd t.
\end{align*}
Defining  $\bzetan(t):=\int_0^t\bz^{[N]}(s)\,\dd s$ and $\ynh(t):=e^{\bgamma^\rmt\bzetan(t)}\hat M\tN(t)$, one obtains
\begin{align}\label{eq:y}
%\dd \xnh(t) & =   e^{\gamma_\infty t}N^{h-\beta-\half}\dd G^N(t)+e^{\gamma_\infty t}N^{h/2-\beta}\dd\hat{B}^N(t)+ e^{\gamma_\infty t}\gamma^\rmt (\bpi-\bz^{[N]}_t)\hat M\tN(t)\,\dd t  \\
\dd \ynh(t) & = N^{h-\beta-\half}e^{\bgamma^\rmt\bzetan_t}\dd G^{[N]}(t)+N^{h/2-\beta}e^{\bgamma^\rmt\bzetan_t}\dd\hat{B}^{[N]}(t). 
\end{align}
In the next two steps we analyze the two terms in the right hand side of \eqref{eq:y}.

\vb

$\rhd$ {\it Step} 2. 
We first consider the first term on the right hand side of \eqref{eq:y}. To analyze it, we need 
the functional central limit theorem for the martingale ${\vec{K}}_{\circ}\hN:=\vec{K}^{[N]}/\sqrt{N}.$ From the proof of Prop.~\ref{prop:nnd} we know that 
\[
\frac{1}{N}\langle {\vec{K}}_\circ^{[N]}\rangle_t=\int_0^t\left(\diag\{Q^\mathrm{T} \vec{Z}^{[N]}(s)\}-Q^\mathrm{T}\diag\{\vec{Z}^{[N]}(s)\}-\diag\{\vec{Z}^{[N]}(s)\}Q\right)\,\dd s,
\]
which by the ergodic theorem converges to $-Q^\mathrm{T}\diag\{\vec{\bpi}\}-\diag\{\vec{\bpi}\}Q$. As the jumps of ${\vec{K}}_\circ^{[N]}$ are of order $O(1/\sqrt{N})$, the martingale central limit theorem (see e.g.\ \cite[Thm.~VIII.3.11]{JS} or \cite[Thm.~7.1.4]{ETHIERKURTZ}) gives the weak convergence of ${\vec{K}}_\circ^{[N]}$ to a $d$-dimensional scaled Brownian motion ${\boldsymbol B}_\circ$ with \[\langle\boldsymbol{B}_\circ\rangle_t=-\left(Q^\mathrm{T}\diag\{\vec{\bpi}\}+\diag\{\vec{\bpi}\}Q\right)\,t.\] Moreover, we then also deduce the weak convergence of the process $\vec{Z}^{[N,Q]}:=\sqrt{N}\int_0^\cdot Q^{\rm T}\vec{Z}^{[N]}(s)\,{\rm d}s.$ to $-\boldsymbol{B}_\circ$, and hence to $\boldsymbol{B}_\circ$ as well. 

\begin{itemize}
\item[$\circ$]
We first apply Lemma \ref{lemmah} with the choice (with $D$ denoting the deviation matrix)
\[
{\boldsymbol \Psi}^{[N]}(t)  := -(\balpha-\rho(t)\bgamma)^\rmt D^\rmt, \:\:\:\:\:
X^{[N]}   := \vec{Z}^{[N,Q]},
\]
to
the process \[G^{[N]}=\sqrt{N}\int_0^\cdot (\balpha-\rho(s)\bgamma)^\rmt (\vec{Z}^{[N]}(s)-\bpi)\,{\rm d}s=-\sqrt{N}\int_0^\cdot (\balpha-\rho(s)\bgamma)^\rmt (QD)^\rmt \vec{Z}^{[N]}(s)\,{\rm d}s,\]
where the last equality follows from $QD=\one\bpi^\rmt-I$ (see the proof of Prop.~\ref{prop:nnd}). Note that ${\boldsymbol \Psi}^{[N]}(t)={\boldsymbol \Psi}(t)$ for all $N$, and therefore it is immediate that the weak limit can be identified as a continuous Gaussian martingale $G$, where it turns out that 
$\langle G\rangle_t=V(t),$ with $V(t)$ defined in (\ref{defv}), which again follows from the proof of Prop.~\ref{prop:nnd}.

\item[$\circ$]
In the next step we consider the processes $\int_0^\cdot{\Psi}^{[N]}(s)\,\dd G^{[N]}(s)$, with ${\Psi}^{[N]}(s):=\exp(\bgamma^\rmt \bzetan(s))$.  As these processes are increasing, we have the a.s.\ convergence of \[\sup_{s\leq T}\,\left|\,\exp(\bgamma^\rmt \bzetan(s))-\exp(\bgamma^\rmt \bpi\,s)\,\right|\to 0\] as $N\to\infty$, by combining the ergodic theorem with \cite[Thm.\ VI.2.15(c)]{JS} (which states that pointwise convergence of increasing functions to a continuous limit implies uniform convergence on compacts). As an immediate consequence of the above and Lemma~\ref{lemmah}, we obtain the weak convergence of $\int_0^\cdot \exp(\bgamma^\rmt \bzetan(s))\,\dd G^{[N]}(s)$ to $\int_0^\cdot \exp(\bgamma^\rmt \bpi\,s)\,\dd G(s)=\int_0^\cdot \exp(\gamma_\infty s)\,\dd G(s)$. 
\end{itemize}

\vb

$\rhd$ {\it Step} 3.
We now consider the second term on the right hand side of \eqref{eq:y}. For the Brownian term $\hat{B}^{[N]}$  we have by the martingale central limit theorem  weak convergence to the Gaussian martingale $\hat{B}$, with quadratic variation $\langle\hat{B}\rangle_t=\sigma^2_\infty t$. The convergence of $\int_0^\cdot \exp(\bgamma^{\rm T} \bzetan(s))\,\dd \hat{B}^{[N]}(s)$ can be handled as above to obtain weak convergence to the Gaussian martingale $\int_0^\cdot \exp(\bgamma^\rmt \bpi s)\,\dd \hat{B}(s)=\int_0^\cdot \exp(\gamma_\infty s)\,\dd \hat{B}(s)$.

\vb

$\rhd$ {\it Step} 4. 
In order to finally obtain the weak limit of $Y^{[N,h]}$ we use\[
h-\beta-\half =\frac{1}{2}\min\{h-1,0\},\:\:\:\:\:
\frac{h}{2}-\beta  =\frac{1}{2}\min\{1-h,0\}.
\]

Clearly, for $h<1$ we have convergence of $Y^{[N,h]}$ to $\int_0^\cdot \exp(\bgamma^\rmt \bpi s)\,\dd \hat{B}(s)$, whereas for $h>1$ we have convergence to  $\int_0^\cdot \exp(\bgamma^\rmt \bpi\,s)\,\dd G(s)$. For $h=1$ we get weak convergence to the sum of these. To see this, recall that the weak convergence of the $G^{[N]}$ was based on properties of the Markov chain, whereas the convergence of the $B^{[N]}$ resulted from considerations involving the Brownian motion $B$, and these basic processes are independent. Note further that $Y^{[N,h]}(0)=N^{-\beta}M(0)-N^{h-\beta}\one_{\{h=0\}}M(0)\to 0$. Combining these results, we find that $Y^{[N,h]}$ converges to a Gaussian martingale $Y$ given by
\[
 Y(t)= 
 \int_0^t e^{\gamma_\infty s}(\one_{\{h\leqslant 1\}}\,\dd \hat{B}(s)+\one_{\{h\geqslant 1\}}\,\dd G(s)),
\]
and hence the $\hat{M}^{[N,h]}$ converge weakly to the limit $\hat{M}$ given by $\hat{M}(t)=e^{-\gamma_\infty t}Y(t)$, and this process satisfies the {\sc sde}
\[
\dd \hat{M}(t)=-\gamma_\infty \hat{M}(t)\,\dd t+(\one_{\{h\leqslant 1\}}\,\dd \hat{B}(t)+\one_{\{h\geqslant 1\}}\,\dd G(t)).
\]
In this equation the (continuous, Gaussian) martingale has quadratic variation 
$\one_{\{h\leqslant 1\}}\sigma_\infty^2 t+
\one_{\{h\geqslant 1\}}V(t)
$. 
Hence we can identify its distribution with that of 
\[
\int_0^\cdot\sqrt{\one_{\{h\leqslant 1\}}\sigma_\infty^2+\one_{\{h\geqslant 1\}}{V}'(s)}\,\dd B(s), 
\]
where $B$ is a standard Brownian motion. This finishes the proof.
\end{proof}

\newcommand{\TT}{^{\rm T}}

\newcommand{\hj}{^{(j)}}
\newcommand{\hk}{^{(k)}}
\newcommand{\hjk}{^{(j,k)}}
\newcommand{{\ha}}{^{(1)}}
\newcommand{{\hb}}{^{(2)}}

\section{Multiple MMOU processes driven by the same background process}
{In this section, we consider a single background process $X$, taking as before values in $\{1,\ldots,d\}$, modulating {\it multiple} {\sc ou} processes. Suppose there are $J\in{\mathbb N}$ such processes, with parameters $(\vec{\alpha}^{(1)},\vec{\gamma}^{(1)},\vec{\sigma}^{(1)})$ up to
$(\vec{\alpha}^{(J)},\vec{\gamma}^{(J)},\vec{\sigma}^{(J)})$. It is further assumed that the {\sc ou} processes are driven by {\it independent} Brownian motions $B_1(\cdot).\ldots, B_J(\cdot)$.} Combining the above, this leads to the $J$ coupled {\sc sde}\,s
\[{\rm d}{M_j(t)} = \left(\alpha\hj_{X(t)}-\gamma\hj_{X(t)}{M_j(t)}\right){\rm d}t +\sigma\hj_{X(t)} \,{\rm d}B_j(t),\]
for $j=1,\ldots,J.$ We call the process a $J$-{\sc mmou} process.

Interestingly, this construction yields $J$ components that have common features, as they react to the same background process, as well as component-specific features, as a consequence of the fact that the driving Brownian motions are independent. This model is particularly useful in settings with multidimensional stochastic processes whose components are affected by the same external factors. 

An example of a situation where this idea can be exploited is that of
multiple asset prices reacting to the (same) state of the economy, which could be represented by a 
background process (for instance with two states, that is, alternating between a `good' and a `bad' state).
In this way the dependence between  the individual components can be naturally modeled. 
In mathematical finance, one of the key challenges is to develop models
that incorporate the correlation between the individual components in a sound way. Some proposals were to simplistic, ignoring
too many relevant details, while others correspond with models with overly many
parameters, with its repercussions in terms of the calibration that needs to be performed.

Another setting in which such a coupling may offer a natural modeling framework is that of a wireless network. Channel conditions may be modeled as alternating between various levels, and users' transmission rates may react in a similar way to these fluctuations.

\vb

Many of the results derived in the previous sections, covering the case $J=1$, can be generalized to the situation of    $J$-{\sc mmou} processes described above. To avoid unnecessary repetition, we
restrict ourselves to a few of these extensions. In particular, we present (i)~the counterpart of Thm.\ \ref{PROP}, stating that $\vec{M}(t)$ is, conditionally on the path of the background process,
multivariate Normally distributed; (ii)~some explicit calculations for the means and (co-)variances for certain special cases; (iii)~the generalization of the {\sc pde} of Thm.\ \ref{PDEth}, (iv) explicit expressions for the steady-state (mixed) moments. 
Procedures for transient moments, and scaling results (such as a $J$-dimensional {\sc clt}) are not included in this paper,
but can be developed as in the single-dimensional case.

\subsection{Conditional Normality}

First we condition on the path $(X(s), s\in [0,t]).$ It is evident that, under this conditioning, the individual components
of ${\vec{M}}(t)$ are independent. The following result describes this setting in greater detail.

\iffalse
{\begin{proposition}\label{PROP_MULT}Denote  by $X$ the path $(X(s), s\in [0,t]).$ $(\vec{M}(t)\,|\,X)$ has a multivariate Normal distribution with random
parameters $(\mbox{\sc m}^{(1)},\ldots,\mbox{\sc m}^{(J)})$ and
$(\mbox{\sc s}\hjk)_{j,k=1}^J$ given by
\[{\mbox{\sc m}}\hj:={\mathbb E}(M_j(t)\,|\,X)=m_0 \hj \exp\left({-\int_0^t\gamma\hj_{X(s)}{\rm d}s}\right)+\int_0^t\exp\left({-\int_s^t \gamma\hj_{X(r)}{\rm d}r}\right)\alpha\hj_{X(s)}\,{\rm d}s\]
and $\mbox{\sc s}\hjk:={\mathbb C}{\rm ov}(M_j(t),M_k(t)\,|\,X)=0$, while for $j=k$,
\[\mbox{\sc s}^{(j,j)}:={\mathbb V}{\rm ar}(M_j(t)\,|\,X)=\int_0^t\exp\left({-2\int_s^t \gamma\hj_{X(r)}{\rm d}r}\right)(\sigma\hj_{X(s)})^2\,{\rm d}s.\]
\end{proposition}}
\fi

\begin{proposition}\label{PROP2}
Define $\Gamma \hj(t) := \int_{0}^{t} \gamma\hj_{X (s)} \, {\rm d} s$, for $j=1,\ldots,J.$ Then the $J$-dimensional stochastic process $(\vec{M}(t))_{t \geqslant 0}$ given by
\begin{align*}
M\hj \left( t \right) = M\hj_0 e^{ - \Gamma\hj \left( t \right) } + \int_{0}^{t} e^{ - \left( \Gamma\hj \left( t \right) - \Gamma\hj \left( s \right) \right) } \alpha\hj_{X \left( s \right)} \, {\rm d} s 
+ \int_{0}^{t} e^{ - \left( \Gamma\hj \left( t \right) - \Gamma\hj \left( s \right) \right) } \sigma\hj_{X \left( s \right)} \, {\rm d} B \left( s \right)
\end{align*}
is the unique $J$-{\sc mmou} process with initial condition $\vec{M}_0$. 

Conditional on the process $X$, the random vector $\vec{M} \left( t \right)$ has a multivariate Normal distribution with, for $j=1,\ldots,J$, random mean
\[
\mu\hj \left( t \right) = M_0\hj \exp\left({-\Gamma \hj(t) }\right)+\int_0^t\exp\left({-
(\Gamma \hj(t) -\Gamma \hj(s)) }\right)\alpha\hj_{X(s)}\,{\rm d}s\]
and random covariance $v^{(j,k)}(t)=0$ if $j\not=k$ and
\[
v ^{(j,j)}\left( t \right) = \int_{0}^{t} 
\exp\left({-2
(\Gamma \hj(t) -\Gamma \hj(s)) }\right)\left(\sigma\hj_{X(s)}\right)^2\,{\rm d}s.
\]
\end{proposition}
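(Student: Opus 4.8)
The plan is to exploit the fact that the $J$ equations are coupled only through the common background process and otherwise decouple in the $M$-variable: each component $M\hj$ solves a \emph{single} {\sc mmou} equation of the form \eqref{SDE:MMOU}, driven by its own Brownian motion $B_j$ together with the shared path $X$. Consequently, the stated representation, its uniqueness, and the (marginal) conditional Normality of each $M\hj(t)$ follow by applying Thm.\ \ref{PROP} verbatim to each index $j$. Concretely, I would first fix $j$, introduce the integrating factor $e^{\Gamma\hj(t)}$, and observe that since $\Gamma\hj$ is continuous and of bounded variation the product rule produces no quadratic-covariation term; this gives $\dd\big(e^{\Gamma\hj(t)}M\hj(t)\big)=e^{\Gamma\hj(t)}\alpha\hj_{X(t)}\,\dd t+e^{\Gamma\hj(t)}\sigma\hj_{X(t)}\,\dd B_j(t)$, and integrating and multiplying back by $e^{-\Gamma\hj(t)}$ yields exactly the claimed formula. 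Uniqueness is inherited from the single-component result, and each marginal conditional law is Normal with the stated $\mu\hj(t)$ and $v^{(j,j)}(t)$.

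What remains, and is the genuinely new content relative to Thm.\ \ref{PROP}, is the \emph{joint} conditional law of the vector $\vec M(t)$. Here I would condition on the whole path $X=(X(s),s\in[0,t])$ (and on the initial vector $\vec M_0$), under which each $\Gamma\hj$ and each coefficient $s\mapsto\sigma\hj_{X(s)}$ becomes a fixed deterministic function. Then $M\hj(t)$ is an affine functional of the Gaussian process $B_j$: a constant plus the Wiener integral $\int_0^t f_j(s)\,\dd B_j(s)$, with $f_j(s):=e^{-(\Gamma\hj(t)-\Gamma\hj(s))}\sigma\hj_{X(s)}$. To establish joint Normality I would invoke the Cram\'er--Wold device: for any $\vec\theta\in\rr^J$ the combination $\sum_j\theta_j M\hj(t)$ is, conditionally, a constant plus $\sum_j\theta_j\int_0^t f_j(s)\,\dd B_j(s)$, a sum of independent Wiener integrals and therefore conditionally Normal. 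Since this holds for every $\vec\theta$, the conditional law of $\vec M(t)$ is multivariate Normal.

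It then remains to compute the conditional covariances, which is where the independence of the $B_j$ enters decisively. For $j\neq k$ we have $\cov\big(M\hj(t),M\hk(t)\mid X\big)=\cov\big(\int_0^t f_j\,\dd B_j,\int_0^t f_k\,\dd B_k\big)$, and because $\langle B_j,B_k\rangle\equiv 0$ this covariance vanishes; the diagonal entries $v^{(j,j)}(t)$ are recovered from the It\^o isometry. Combined with the joint Normality just established, the vanishing off-diagonal covariances upgrade to full conditional independence of the components, matching the claim. The only point needing real care is logical rather than computational: zero covariance by itself does not imply independence, so the joint-Normality step (via Cram\'er--Wold) must precede any appeal to independence. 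I would also note, as in Thm.\ \ref{PROP}, that the ``random mean'' is random only through $X$ and $\vec M_0$, so the assertion is properly a statement conditional on $(X,\vec M_0)$, the stochastic integrals being independent of that pair.
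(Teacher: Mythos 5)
Your proof is correct and follows essentially the same route as the paper: the representation, uniqueness, and marginal conditional Normality are obtained by applying Thm.\ \ref{PROP} (proved in Appendix~\ref{sec:mmouexist}) componentwise, and the joint statement rests on the conditional independence of the components given $X$, which the paper simply declares ``evident'' from the independence of the driving Brownian motions $B_1,\ldots,B_J$. Your Cram\'er--Wold argument merely makes that step explicit (and you are right to order it before any appeal to independence), so there is nothing to object to.
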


\subsection{Mean and (co-)variance}

The mean and (co-)variance of $\vec{M}(t)$ for $J$-{\sc mmou} can be computed relying on stochastic integration theory, with a procedure similar to the one relied on in Section~\ref{TB}; we do not include the resulting expressions.%\footnote{\tt {\footnotesize MM: in this section I have consistently used superscripts for the $J$ components. It means that we have to avoid superscripts of the type $^{(N)}$ in Section 5.}}

{We consider in greater detail the special case that $\gamma_i\hj\equiv \gamma\hj$ for all $i\in\{1,\ldots,d\}$ (as in Section~\ref{TB}), because in this situation expressions simplify greatly. The means and variances can be found as in Prop.~\ref{PROP}; we now point out how to compute the covariance $v_t\hjk:={\mathbb C}{\rm ov}(M\hj(t), M^{(k)}(t))$ (with $j\not=k$), relying on the {\it law of total covariance}. We write, in self-evident notation,
\[v_t\hjk={\mathbb E}({\mathbb C}{\rm ov}(M\hj(t), M^{(k)}(t)\,|\,X))+
{\mathbb C}{\rm ov}({\mathbb E}(M\hj(t)\,|\,X), {\mathbb E}(M^{(k)}(t)\,|\,X)).\]
The first term obviously cancels (cf.\ Prop.\ \ref{PROP2}), while the second reads
\begin{eqnarray*}\lefteqn{\hspace{-1cm}\frac{1}{\gamma\hj+\gamma\hk}\left(\sum_{i_1=1}^d\sum_{i_2=1}^d \alpha\hj_{i_1}\alpha_{i_2}\hk\int_0^t\left(e^{-\gamma\hk v}-e^{-(\gamma\hj+\gamma\hk)t+\gamma\hj v}\right)\pi_{i_1}(p_{i_1i_2}(v)-\pi_{i_2}){\rm d}v\right.}\\
&&\left.\sum_{i_1=1}^d\sum_{i_2=1}^d \alpha\hk_{i_1}\alpha_{i_2}\hj\int_0^t\left(e^{-\gamma\hj v}-e^{-(\gamma\hk+\gamma\hj)t+\gamma\hk v}\right)\pi_{i_1}(p_{i_1i_2}(v)-\pi_{i_2}){\rm d}v\right)
.\end{eqnarray*}
We consider two limiting regimes.
\begin{itemize}
\item[$\rhd$]
For $t\to\infty$, it is readily checked that there is convergence to
\[\frac{1}{\gamma\hj+\gamma\hk}\left((\vec{\alpha}\hj)^{\rm T}{\rm diag}\{\vec{\pi}\}D(\gamma\hk)\vec{\alpha}\hk+
(\vec{\alpha}\hk)^{\rm T}{\rm diag}\{\vec{\pi}\}D(\gamma\hj)\vec{\alpha}\hj\right).\]
\item[$\rhd$] Apply, as before, the scaling ${\boldsymbol\alpha}\mapsto N^h{\boldsymbol\alpha}$, ${\boldsymbol\sigma^2}\mapsto N^h{\boldsymbol\sigma^2}$, and $Q\mapsto NQ$ for some $h>0$. We obtain that the covariance,
for $N$ large, behaves as
\[\left(\frac{1-e^{-(\gamma \hj+\gamma\hk)t}}{\gamma\hj+\gamma\hk}\right)\left(2N^{2h-1}(\vec{\alpha}\hj)^{\rm T}{\rm diag}\{\vec{\pi}\}D\vec{\alpha}\hk\right).\]
\end{itemize}}

{\begin{Ex} {\rm 
We now provide explicit results for $t\to\infty$ for the case $d=2,J=2$.
It can be verified that, with $q_1:=q_{12}$, $q_2:=q_{21}$ and $q:=q_1+q_2$,
\[D(\gamma\hj)=\frac{1}{q(q+\gamma\hj)}\left(\begin{array}{rr}
q_2&-q_2\\
-q_1&q_1\end{array}\right).\]
It is a matter of elementary calculus to show that the steady-state covariance is
\[{\mathbb C}{\rm ov}(M_1, M_2)
=\frac{1}{\gamma\ha+\gamma\hb}\frac{q_1q_2}{q^2}\frac{2q+\gamma\ha+\gamma\hb}{(q+\gamma\ha)(q+\gamma\hb)}
\left((\alpha_1\hb-\alpha_2\hb)(\alpha_1\ha-\alpha_2\ha)\right)
\]
whereas, for $j=1,2$,
\[{\mathbb V}{\rm ar}\,M_j=\frac{q_1(\sigma_2\hj)^2+q_2(\sigma_1\hj)^2}{2\gamma\hj q}+\frac{1}{\gamma\hj}\frac{q_1q_2}{q^2(q+\gamma\hj)}\left(\alpha_1\hj-\alpha_2\hj\right)^2.\]
These expressions enable us to compute the correlation coefficient between
$M_1$ and $M_2$. For the special case that $\vec{\sigma}\ha=\vec{\sigma}\hb=\vec{0}$, we obtain, modulo its sign,
\[\sqrt{\frac{\gamma\ha\gamma\hb}{(q+\gamma\ha)(q+\gamma\hb)}}\frac{2q+\gamma\ha+\gamma\hb}{\gamma\ha+\gamma\hb},\]
which can be verified to be smaller than 1.
}\end{Ex}}

\subsection{Transient behavior: partial differential equations}

In order to uniquely characterize the joint distribution of $\vec{M}(t)$, we now set up a system of partial differential equations for
the objects
\[g_i(\vec{\vt},t):= {\mathbb E} \left(e^{\sum_{j=1}^J \vt_j M\hj(t) } 1\{X(t)=i\}\right),\]
with $i\in\{1,\ldots,d\}.$
Relying on the machinery  used when establishing the system of {\sc pde}\,s featuring in Thm.\ \ref{PDEth}, we obtain that ${\partial {\boldsymbol g}(\vec{\vt},t)}/{\partial t}$ equals
\[Q^{\rm T}{\boldsymbol  g}(\vec{\vt},t)-
\sum_{j=1}^J\left(\vt_j\,{\rm diag}\{{\boldsymbol \alpha}\hj\}-\frac{1}{2}\vt_j^2{\rm diag}\{({\boldsymbol \sigma}\hj)^2\}\right)
{\boldsymbol g}(\vec{\vt},t)-\sum_{j=1}^J\vt_j\,{\rm diag}\{{\boldsymbol \gamma}\hj\}\,\frac{\partial}{\partial \vt_j}{\boldsymbol g}(\vec{\vt},t).
\]
\iffalse The corresponding initial conditions are ${\boldsymbol g}(\vec{0},t)={\boldsymbol p}_t$ and \[{\boldsymbol g}(\vec{\vt},0) = 
e^{-\sum_{j=1}^J\vt_j m_0\hj}{\boldsymbol p}_0.\]\fi

\subsection{Recursive scheme for higher order moments}

The above system of {\sc pde}\,s can be used to determine all (transient and stationary) moments related to $J$-{\sc mmou}. We restrict ourselves to the stationary moments here. 
Define  $\vec{h}_{\vec{k}}= (h_{1,\vec{k}},\ldots,h_{d,\vec{k}})^{\rm T},$
where
\[h_{i,\vec{k}}:=
{\mathbb E}\left((-1)^{\sum_{j=1}^J k_j} (M^{(1)})^{k_1}\cdots (M^{(J)})^{k_J}1\{X=i\}\right).\]
Observe that $\vec{h}_{\vec{0}} =\vec{\pi}.$
With techniques similar to those applied earlier, $\vec{e}_j\in{\mathbb R}^J$ denoting the $j$-th unit vector, we obtain the recursion
\begin{eqnarray*}\vec{h}_{\vec{k}} &=& \left(Q\TT-\sum_{j=1}^J k_j\,{\rm diag}\{{\boldsymbol \gamma}\hj\}\right)^{-1}\\
&&\times\:
\left(\sum_{j=1}^J k_j\,{\rm diag}\{{\boldsymbol \alpha}\hj\} \vec{h}_{\vec{k}-\vec{e}_j}
-\frac{1}{2}\sum_{j=1}^J k_j(k_j-1)\,{\rm diag}\{({\boldsymbol \sigma}\hj)^2\} \vec{h}_{\vec{k}-2\vec{e}_j}\right).\end{eqnarray*}
This procedure allows us to compute all mixed moments, thus facilitating the calculation of covariances as well. 
In the situation of $J=2$, for instance, we find that
\[{\mathbb E}M^{(1)}M^{(2)} = \vec{1}\left(Q\TT-{\rm diag}\{{\boldsymbol \gamma}\ha\}-
{\rm diag}\{{\boldsymbol \gamma}\hb\}\right)^{-1} 
\left(
{\rm diag}\{{\boldsymbol \alpha}\ha\}\vec{h}_{0,1}+
{\rm diag}\{{\boldsymbol \alpha}\hb\}\vec{h}_{1,0}
\right),\]
where $\vec{h}_{0,1}$ and $\vec{h}_{1,0}$ follow from the analysis presented in Section 5.

\begin{remark} {\rm
The model proposed in this section describes a $J$-dimensional stochastic process with dependent components. In many situations, the dimension $d$ can be chosen relatively 
small  (see for instance \cite{BAN,DAV}), whereas $J$ tends to  be large (e.g., in the context of asset prices).
Importantly, the $\frac{1}{2}J(J+1)=O(J^2)$ entries of the covariance matrix of ${\vec{M}}(t)$ (or its stationary counterpart $\vec{M}$) are endogenously determined by the model, and need not be estimated from data.
Instead, this approach requires the calibration of just the $d(d-1)$ entries of the $Q$-matrix, as well as the $3dJ$ parameters of the underlying {\sc ou}
processes, totaling $O(J)$ parameters. We conclude that, as a consequence, this framework offers substantial potential advantages. 
}\end{remark}

\section{Discussion and concluding remarks}
This paper has presented a set of results on {\sc mmou}, ranging from procedures to compute moments and a {\sc pde} for the Fourier-Laplace transform, to weak convergence results under specific scalings and a multivariate extension in which multiple {\sc mmou}\,s are modulated by the same background process. Although a relatively large number of aspects is covered, there are many issues that still need to be studied. One such area concerns the large-deviations behavior under specific scalings, so as to obtain the counterparts of the results obtained in e.g.\ \cite{BMT2, JMKO, JM} for the Markov-modulated infinite-server queue. 

It is further remarked that in this paper we looked at an regime-switching version of the {\sc ou} process, but of course we could have considered various other processes. One option is the Markov-modulated version of the so-called Cox-Ingersoll-Ross ({\sc cir}) process:
\[
{\rm d} M \left( t \right) = \left( \alpha_{X \left( t \right)} - \gamma_{X \left( t \right)} M \left( t \right) \right) \, {\rm d} t + \sigma_{X \left( t \right)}\sqrt{M(t)} \, {\rm d} B \left( t \right).
\]
Some results we have established for {\sc mmou} have their immediate {\sc mmcir} counterpart, while for others there are crucial differences. It is relatively straightforward to adapt the  procedure used in Section \ref{FLT}, to set up a system of {\sc pde}\,s for the Fourier-Laplace transforms
(essentially based on It\^{o}'s rule). Interestingly, the recursions to generate all moments are now one-step (rather than two-step) recursions.  A further objective would be to see to what extent the results of our paper generalize to more general classes of diffusions; see e.g.\ \cite{GANG}.

\appendix

\section{Existence and basic properties of MMOU}\label{sec:mmouexist}
In this appendix we provide background and
formal underpinnings of results presented in Section \ref{MOD}.
Throughout we work with a given probability space $\left( \Omega , \mathcal{F} , \mathbb{P} \right)$ on which a random variable $M_0$, a standard Brownian motion $B$, and a continuous-time Markov process $X$ with finite state space are defined. It is assumed that $M_0$, $X$ and $B$ are independent. Denote the natural filtrations of $X$ and $B$ by $( \mathcal{F}_{t}^{X} )_{t \geqslant 0}$  and $( \mathcal{F}_{t}^{B} )_{t \geqslant 0}$, respectively. As before, the state space of $X$ is $\left\lbrace 1 , \ldots , d \right\rbrace$ for some $d\in{\mathbb N}$, and we let $\alpha_{i} \in \mathbb{R}$, $\gamma_{i}>0$ and $\sigma_{i} \in \mathbb{R}$ for $i \in \left\lbrace 1 , \ldots , d \right\rbrace$.  We start by the definition of {\sc mmou}, cf.\ Equation (\ref{SIE:intMMOU}).

\begin{definition}
A stochastic process $M$ is called an {\sc mmou} process with initial condition $M_0$ if
\begin{align}
\label{eq:defmmou}
M \left( t \right) &= M_0 + \int_{0}^{t} \left( \alpha_{X \left( s \right)} - \gamma_{X \left( s \right)} M \left( s \right) \right) \, {\rm d} s + \int_{0}^{t} \sigma_{X \left( s \right)} \, {\rm d} B \left( s \right)
\end{align}
for all $t \geq 0$.
\end{definition}

\vb

To show existence of an {\sc mmou} process, we first need a filtration $\left( \mathcal{H}_t \right)_{t \geqslant 0}$ that satisfies the usual conditions and with respect to which $X$ is adapted and $B$ is a Brownian motion. Define the multivariate process $Y$ by $Y_t = \left( M_0 , X_t , B_t \right)$. Its natural filtration is given by $\mathcal{F}_{t}^{Y} = \sigma \left( M_0 , \mathcal{F}_{t}^{X} , \mathcal{F}_{t}^{B} \right)$. Using the independence assumptions, it is easily verified that $Y$ is a Markov process with respect to $\mathcal{F}_{t}^{Y}$ and that $B$ is a Brownian motion with respect to this filtration. In addition, $Y$ is a Feller process. This is an immediate result of the independence assumptions and the fact that $M_0$ (viewed as a stochastic process), $X$ and $B$ are Feller processes.

Now define the augmented filtration $\left( \mathcal{H}_t \right)_{t \geqslant 0}$ via $\mathcal{H}_t = \sigma \left( \mathcal{F}_{t}^{Y} , \mathcal{N} \right)$, where $\mathcal{N}$ consists of all $F \subset \Omega$ such that there exists $G \in \mathcal{F}_{\infty}^{Y}$ with $F \subset G$ and $\mathbb{P} \left( G \right) = 0$. Since $Y$ has c\`{a}dl\`{a}g paths, it follows from \cite[Prop.
III.2.10]{RY} that $\left( \mathcal{H}_t \right)_{t \geqslant 0}$ satisfies the usual conditions. Relative to this filtration, the process $B$ is a Brownian motion \cite[Th.~2.7.9]{KS} and $Y$ is a Feller process \cite[p.~92]{KS}. 

\vb

We now verify in detail the validity of Thm.\ \ref{PROP}. To construct an {\sc mmou} process, define the stochastic process
\begin{align*}
\Gamma \left( t \right) &:= \int_{0}^{t} \gamma_{X \left( s \right)} \, {\rm d} s,
\end{align*}
which is clearly adapted to $\left( \mathcal{H}_t \right)_{t \geqslant 0}$. The continuous stochastic process
\begin{equation}
\label{eq:gammarepr}
M \left( t \right) = M_0 e^{ - \Gamma \left( t \right)} + \int_{0}^{t} e^{- \left( \Gamma \left( t \right) - \Gamma \left( s \right) \right) }\alpha_{X \left( s \right)} \, {\rm d} s + \int_{0}^{t} e^{- \left( \Gamma \left( t \right) - \Gamma \left( s \right) \right) }\sigma_{X \left( s \right)} \, {\rm d} B \left( s \right)
\end{equation}
is well defined and adapted to $\left( \mathcal{H}_{t} \right)_{t \geqslant 0}$, too. Using similar techniques as in the construction of ordinary {\sc ou} (cf.\ \cite[Ch.~V.5]{RW}), one verifies that $M \left( t \right)$ satisfies Equation~\eqref{eq:defmmou}, so the stochastic process $M$, as given by (\ref{eq:gammarepr}), is an {\sc mmou} process.

\vb

Now we would like to know whether a process that satisfies the stochastic differential equation~\eqref{eq:defmmou} is unique. Of course, uniqueness up to indistinguishability is the strongest form of uniqueness we can get. We will show that this holds for {\sc mmou}.
To this end, suppose we have two {\sc mmou} processes $M^{(1)}$ and $M^{(2)}$, i.e.,
\begin{align*}
M^{(i)}(t) = M_0 + \int_{0}^{t} \left( \alpha_{X \left( s \right)} - \gamma_{X \left( s \right)} M^{(i)} \left( s \right) \right) \, {\rm d} s + \int_{0}^{t} \sigma_{X \left( s \right)} \, {\rm d} B \left( s \right), \qquad i \in \left\lbrace 1,2 \right\rbrace.
\end{align*}
Then $V (t) := M^{(1)} (t) - M^{(2)} (t)$ satisfies
\begin{align*}
V(t) = - \int_{0}^{t} \gamma_{X(s)} V(s) \, {\rm d} s
\end{align*}
with initial condition $V(0) = 0$, on a measurable set $\Omega^{\star}$ that has probability $1$. 
If $V(t) = 0$ for all $t \geq 0$ for every $\omega \in \Omega^{\star}$, then
$M^{(1)}$ and $M^{(2)}$ are indistinguishable.
This is indeed the case, as  a direct consequence of \cite[Th.~I.5.1]{HALE} and \cite[Th.~I.5.3]{HALE}. Consequently, every {\sc mmou} process admits a representation as in Equation~\eqref{eq:gammarepr}.

\vb

For fixed $t \geq 0$ we would like to know the distribution of $M \left( t \right)$. Let, for a given $\Gamma(t)$, $\mu(t)$ and $v(t)$ be given by (\ref{rmu}) and (\ref{rv}), respectively. Observe that we may write \[M \left( t \right) = \mu \left( t \right) + \int_{0}^{t} \exp \left( - \left( \Gamma \left( t \right) - \Gamma \left( s \right) \right) \right) \sigma_{X \left( s \right)} \, {\rm d} B \left( s \right).\]

Using the independence assumptions and standard properties of integrals with respect to Brownian motion, it is easily verified that
\begin{eqnarray*}
{\mathbb E} \left[ e^ { {\rm i} \theta M \left( t \right)} \middle\vert \mathcal{F}_{\infty}^{X} \right] &= &{\mathbb E} \left[ \exp \left( {\rm i} \theta \left( \mu \left( t \right) + \int_{0}^{t}e^{- \left( \Gamma \left( t \right) - \Gamma \left( s \right) \right) } \sigma_{X \left( s \right)} \, {\rm d} B \left( s \right) \right) \right) \middle\vert \mathcal{F}_{\infty}^{X} \right]\\
&=& e^{  {\rm i} \theta \mu \left( t \right) } {\mathbb E} \left[ \exp \left( {\rm i} \theta \left( \int_{0}^{t} e^{- \left( \Gamma \left( t \right) - \Gamma \left( s \right) \right) } \sigma_{X \left( s \right)} \, {\rm d} B \left( s \right) \right) \right) \middle\vert \mathcal{F}_{\infty}^{X} \right]\\
&=& e^{  {\rm i} \theta \mu \left( t \right) }\exp \left( - \tfrac{1}{2} \theta^2 \int_{0}^{t} e^{- 2\left( \Gamma \left( t \right) - \Gamma \left( s \right) \right) } \sigma_{X \left( s \right)}^2 \, {\mathrm d} s \right)\\
&=&
e^{  {\rm i} \theta \mu \left( t \right)- \tfrac{1}{2} \theta^2 v \left( t \right) } ,\end{eqnarray*}which implies the Normality claim in Thm.\ \ref{PROP}.

{\small
}


\begin{thebibliography}{99}

\bibitem{ALI}
{\sc L. Alili, P. Patie}, and {\sc J. Pedersen} (2005).
Representations of the first hitting time density of an Ornstein-Uhlenbeck process,
{\em Stochastic Models} {\bf  21}, 967--980.

\bibitem{DAVE}
{\sc D. Anderson, J. Blom, M. Mandjes, H. Thorsdottir,} and {\sc  K. de Turck} (2014).
A functional central limit theorem for a Markov-modulated infinite-server queue. {\it Methodology and Computing in Applied Probability,} to appear.

\bibitem{ANG}
{\sc A. Ang} and {\sc G. Bekaert} (2002).
Regime switches in interest rates.
{\it Journal of Business and Economic Statistics} {\bf 20},  163--182.

\bibitem{ASM}
{\sc S. Asmussen}  (2003). {\it Applied Probability and Queues}, 2nd edition.
Springer, New York.

\bibitem{AA}
{\sc S. Asmussen} and {\sc H. Albrecher} (2010). {\it Ruin probabilities}, 2nd edition. World Scientific, New Jersey.

\bibitem{BAN}
{\sc K. Banachewicz, A. van der  Vaart,} and {\sc A. Lucas} (2008).
Modeling portfolio defaults using Hidden Markov Models with covariates.
{\it Econometrics Journal} {\bf 11}, 155-171.

\bibitem{BMT}
{\sc J. Blom, K. de Turck,} and {\sc M. Mandjes} (2014). Analysis of  Markov-modulated infinite-server queues
in the central-limit regime. To appear in: {\em Probability in the Engineering and Informational Sciences}.
(A short version has appeared in: {\it Proceedings ASMTA 2013}, Ghent, Belgium. Lecture Notes in Computer Science (LNCS) Series, Vol. 7984, 81--95.)

\bibitem{BMT2}
{\sc J. Blom, K. de Turck,} and {\sc M. Mandjes}  (2013). Rare-event analysis of Markov-modulated infinite- server queues: a Poisson limit. {\it Stochastic Models} {\bf 29},  463--474.

\bibitem{BKMT}
{\sc J. Blom, O. Kella, M. Mandjes,} and {\sc H. Thorsdottir} (2013). Markov-modulated infinite-server queues with general service times. {\it Queueing Systems} {\bf 76}, 403--424.

\bibitem{JMKO}
{\sc J. Blom, O. Kella, M. Mandjes,} and {\sc K. de Turck} (2014). Tail asymptotics of a Markov-modulated infinite-server queue. {\it Queueing Systems} {\bf  78}, 337--357.

\bibitem{JM}
{\sc J. Blom} and {\sc M. Mandjes} (2012). A large-deviations analysis of Markov-modulated inifinite-server queues. {\it Operations Research Letters,} {\bf 41}, 220-225.

\bibitem{BMTh}
{\sc J. Blom, M. Mandjes,} and {\sc H. Thorsdottir} (2013). Time-scaling limits for
Markov-modulated infinite-server queues. {\it Stochastic Models}{ {\bf 29}, 112--127.}



\bibitem{CS}
{\sc P. Coolen-Schrijner} and {\sc E. van Doorn} (2002).
The deviation matrix of a continuous-time Markov chain. {\em Probability in the Engineering and Informational Sciences} {\bf 16},   351--366.


\bibitem{DAURIA}
{\sc {B. D'Auria}} (2008).
\newblock M/{M}/$\infty$ queues in semi-{M}arkovian random environment.
\newblock {\em Queueing Systems} {\bf 58}, 221--237.


\bibitem{EMAM}
{\sc R. Elliott} and {\sc R. Mamon} (2002).
An interest rate model with a Markovian mean-reverting level. {\it Quantitative Finance} {\bf 2},  454--458.

\bibitem{ESIU}
{\sc R. Elliott} and {\sc T. Siu} (2009).
On Markov-modulated exponential-affine bond price formulae.
{\it Applied Mathematical Finance} {\bf 16}, 1--15.

\bibitem{FRALIXADAN2009}
{\sc B. Fralix} and {\sc I. Adan} (2009).
\newblock An infinite-server queue influenced by a semi-{M}arkovian
  environment.
\newblock {\em Queueing Systems} {\bf 61}, 65--84.

\bibitem{DAV}
{\sc G. Giampieri, M. Davis,} and {\sc M. Crowder} (2005).  A hidden Markov model of default interaction.
{\it  Quantitative Finance} {\bf  5}, 27--34.


\bibitem{HALE}
{\sc J. Hale} (1980). {\it Ordinary Differential Equations}, 2nd edition.
Krieger, Malabar, Florida.

\bibitem{HAM}
{\sc J. Hamilton} (1989).
A new approach to the economic analysis of nonstationary time series and the business cycle.
{\it Econometrica} {\bf 57}, 357--384.

\bibitem{GANG}
{\sc G. Huang, M. Mandjes,} and {\sc P. Spreij} (2014). Weak convergence of Markov-modulated diffusion
processes with rapid switching. {\it Statistics and Probability Letters} {\bf 86}, 74-79.

\bibitem{JAC}
{\sc M. Jacobsen} (1996).
Laplace and the origin of the Ornstein-Uhlenbeck process.
{\it Bernoulli} {\bf  2},  271--286.

\bibitem{JS}
{\sc J.~Jacod} and {\sc A.N.~Shiryaev} (2003).
{\it Limit theorems for stochastic processes}, 2nd edition.
Springer, New York.


\bibitem{KS}
{\sc I. Karatzas} and {\sc S. Shreve} (1991). {\it Brownian Motion and Stochastic Calculus}, 2nd edition.
Springer, New York.

\bibitem{KEIL}
{\sc J. Keilson} (1979).
{\it Markov Chain Models: Rarity and Exponentiality}.
Springer, New York.

\bibitem{KEM}
{\sc J. Kemeny} and {\sc J. Snell} (1961). {\it Finite  Markov Chains.}
Van Nostrand, New York.

\bibitem{ETHIERKURTZ} 
{\sc S.N. Ethier} and {\sc T.G. Kurtz} (2005). {\it Markov Processes.
Characterization and Convergence.} Wiley, New York.



\bibitem{NEUTS}
{\sc M. Neuts} (1981).
{\it Matrix-Geometric Solutions in Stochastic Models: An Algorithmic Approach.}
Courier Dover Publications, New York.

\bibitem{OCP}
{\sc C. O'Cinneide} and {\sc  P. Purdue} (1986).
\newblock The {M}/{M}/$\infty$ queue in a random environment.
\newblock {\em Journal of Applied Probability} {\bf 23}, 175--184.

\bibitem{RY}
{\sc D. Revuz} and {\sc M. Yor} (1999).
{\it Continuous Martingales and Brownian
Motion}, 3rd edition. Springer, Berlin.

\bibitem{ROBE}
{\sc P. Robert} (2003).
{\it Stochastic Networks and Queues.}
Springer,  New York.

\bibitem{RW}
{\sc L.C.G. Rogers} and {\sc D. Williams} (2000).
{\it Diffusions, Markov Processes, and Martingales: Volume 2, It\^{o} Calculus}, 2nd edition. Cambridge University Press, Cambridge.

\bibitem{SYS}
{\sc R. Syski} (1978). Ergodic potential. {\it Stochastic Processes and their Applications}
{\bf 7}, 311--336.

\bibitem{OU}
{\sc G. Uhlenbeck} and {\sc L. Ornstein} (1930).
On the theory of Brownian motion. {\it Physical Review} {\bf 36}, 823--841.

\bibitem{WILL}
{\sc D. Williams} (1991).
{\it Probability with Martingales}. Cambridge University Press, Cambridge.


\bibitem{XING}
{\sc X. Xing, W. Zhang,} and {\sc Y. Wang} (2009).
The stationary distributions of two classes of reflected Ornstein--Uhlenbeck processes.
{\it Journal of Applied Probability} {\bf 46}, 709--720.


\end{thebibliography}
\end{document}